\newcolumntype{C}{>{\centering\arraybackslash}X}
\newcolumntype{D}{>{\centering\arraybackslash}X}
\newtheorem{theorem}{Theorem}
\newtheorem{lemma}[theorem]{Lemma}
\newtheorem{construction}[theorem]{Construction}
\newtheorem{corollary}[theorem]{Corollary}
\newtheorem*{claim*}{Claim}
\theoremstyle{remark}
\newcommand{\cL}{\ensuremath{\mathcal{L}}}
\newcommand{\cA}{\ensuremath{\mathcal{A}}}
\newcommand{\cB}{\ensuremath{\mathcal{B}}}
\newcommand{\cD}{\ensuremath{\mathcal{D}}}
\newcommand{\cF}{\ensuremath{\mathcal{F}}}
\newcommand{\cP}{\ensuremath{\mathcal{P}}}
\newcommand{\cH}{\ensuremath{\mathcal{H}}}
\newcommand{\cS}{\ensuremath{\mathcal{S}}}
\newcommand{\cT}{\ensuremath{\mathcal{T}}}
\newcommand{\N}{\ensuremath{\mathbb{N}}}
\newcommand{\cN}{\ensuremath{\mathcal{N}}}
\newcommand{\cK}{\ensuremath{\mathcal{K}}}
\newcommand{\cU}{\ensuremath{\mathcal{U}}}
\newcommand{\cV}{\ensuremath{\mathcal{V}}}
\newcommand{\cW}{\ensuremath{\mathcal{W}}}
\newcommand{\bX}{\ensuremath{\mathbf{X}}}
\newcommand{\bY}{\ensuremath{\mathbf{Y}}}
\newcommand{\bZ}{\ensuremath{\mathbf{Z}}}
\newcommand{\EEE}{\ensuremath{\mathbb{E}}}
\newcommand{\QQ}{\ensuremath{\mathcal{Q}}}
\newcommand{\PPP}{\ensuremath{\mathbb{P}}}
\newcommand\olt{\mathbin{\!\mathpalette\make@circled{\hspace{-1.05pt}\vcenter{\hbox{\scalebox{0.91}{$\m@th <$}}}}}}
\newcommand\opl{\mathbin{\!\mathpalette\make@circled{\vcenter{\hbox{\scalebox{0.65}{$\m@th ||$}}}}}}
\newcommand{\make@circled}[2]{\raisebox{1pt}{
  \ooalign{$\m@th#1\smallbigcirc{#1}$\cr\hidewidth$\m@th#1#2$\hidewidth\cr}}%
}
\newcommand{\smallbigcirc}[1]{%
  \vcenter{\hbox{\scalebox{0.76}{$\m@th#1\bigcirc$}}}%
}
\begin{document}

\title{Erd\H{o}s-Hajnal problems for posets}
\author{Christian Winter \footnote{Karlsruhe Institute of Technology, Karlsruhe, Germany. E-mail: \textit{christian.winter@kit.edu}} }
\maketitle

\begin{abstract}
We say that a poset $(Q,\le_{Q})$ contains an induced copy of a poset $(P,\le_P)$ if there is an injective function $\phi\colon P\to Q$ such that for every two $X,Y\in P$,\;\;$X\le_P Y$ if and only if $\phi(X)\le_Q \phi(Y)$. We denote the Boolean lattice $(2^{[n]},\subseteq)$ by $Q_n$.
Given a fixed $2$-coloring $c$ of a poset $P$, the poset Erd\H{o}s-Hajnal number of this colored poset is the smallest integer $N$ such that every $2$-coloring of the Boolean lattice $Q_N$ contains an induced copy of $P$ colored as in $c$, or a monochromatic induced copy of $Q_n$.
We present bounds on the poset Erd\H{o}s-Hajnal number of general colored posets, antichains, chains, and small Boolean lattices.

Let the poset Ramsey number $R(Q_n,Q_n)$ be the least $N$ such that every $2$-coloring of $Q_N$ contains a monochromatic induced copy of $Q_n$.
As a corollary, we show that $R(Q_n,Q_n)> 2.02n$, improving on the best known lower bound $2n+1$ by Cox and Stolee \cite{CS}.
\end{abstract}

\section{Introduction}
The classic question in Ramsey theory is to quantify the size of a host structure such that in any coloring of its elements, a large monochromatic substructure exists.
In the setting of graphs, Erd\H{o}s and Hajnal \cite{EH} introduced a related problem: 
Given a fixed graph $H$ edge-colored with colors blue and red, 
determine the minimal order of a complete graph such that any blue/red coloring of its edges contains 
a subgraph isomorphic to $H$ with a matching color pattern, or a monochromatic complete graph on $n$ vertices.
The well-known Erd\H{o}s-Hajnal conjecture states that the answer to the above problem is at most $n^{c(H)}$ where $c(H)$ is a constant, depending on $H$.
This conjecture is wide-open for most graphs $H$. For more details, we refer to a survey by Chudnovsky \cite{Chudnovsky} and other recent results, e.g., \cite{MSZ, Weber, NSS}.
In this paper, we propose a similar concept for \textit{posets}.
\\

A \textit{poset} is a set $P$ equipped with a binary relation $\le_P$ which is transitive, reflexive, and anti-symmetric.
The \textit{Boolean lattice} $Q_n$ of \textit{dimension} $n $ is the poset consisting of all subsets of an $n$-element ground set, ordered by the inclusion relation $\subseteq$.
The elements of $P$ are usually referred to as \textit{vertices}.
A \textit{colored poset} is a pair $(P,c_P)$, where $P$ is a poset and $c_P\colon P\to\{\text{blue, red}\}$ is a blue/red coloring of the vertices of $P$.
If a poset $P$ has a fixed coloring $c_P$, we usually write $\dot P$ instead of $(P,c_P)$. The \textit{size} of a colored poset $\dot P$ is the size of the underlying poset $P$.
Occasionally, we specify the assigned coloring using an additional superscript. 
In particular, the poset $P$ which is colored monochromatically blue is denoted by ${\dot P^{(b)}}$. In this case, we say that $\dot P$ is \textit{blue}. 
Similarly, we refer to a poset $P$ colored monochromatically red as ${\dot P^{(r)}}$ and say that $\dot P$ is \textit{red}.
\medskip

A poset $P$ is an \textit{induced subposet}, or \textit{subposet} for short, of a poset $Q$ if $P\subseteq Q$ and for any two $X,Y\in P$, $X\le_P Y$ if and only if $X\le_Q Y$.
A \textit{copy} of a poset $P$ in $Q$ is an induced subposet $P'$ of $Q$ that is isomorphic to $P$.
Equivalently, a copy is the image of an \textit{embedding} $\phi\colon P\to Q$, 
i.e., a function such that for every $X,Y\in P$,\:\:$X\le_{P} Y$ if and only if $\phi(X)\le_{Q}\phi(Y)$.
Given a fixed blue/red coloring of $Q$, a \textit{colored copy}, or \textit{copy} for short, of a colored poset $\dot P$ in $Q$ is a copy $P'$ of $P$ in $Q$ such that each vertex $Z\in P'$ has the same color in $Q$ as its corresponding vertex in $\dot P$.
For any fixed colored poset $\dot P$, a blue/red coloring of $Q$ is \textit{$\dot P$-free} if it contains no colored copy of $\dot P$.
\\

Extending the classic definition of Ramsey numbers for graphs, Axenovich and Walzer \cite{AW} introduced the \textit{poset Ramsey number} $R(P_1,P_2)$ of posets $P_1$ and $P_2$, defined as
the smallest $N\in\N$ for which every coloring of $Q_N$ contains a copy of $\dot P_1^{(b)}$ or $\dot P_2^{(r)}$.
A central question in this setting is to determine $R(Q_n,Q_n)$, where the best upper bound is currently $R(Q_n,Q_n)\le n^2-\Theta(n\log n)$, 
see listed chronologically Walzer \cite{Walzer}, Axenovich and Walzer \cite{AW}, Lu and Thompson \cite{LT}, Axenovich and Winter \cite{QnQn}.
The best known lower bound is $R(Q_n,Q_n)\ge 2n+1$ by Cox and Stolee \cite{CS} who improved the trivial lower bound $2n$ using a probabilistic construction for $n\ge 13$.
Later, Bohman and Peng \cite{BP} gave an explicit construction proving the same bound for $n\ge 3$.
Exact bounds are only known for $n\le 3$;
Axenovich and Walzer \cite{AW} showed that $R(Q_2,Q_2)=4$, and Falgas-Ravry, Markstr\"om, Treglown and Zhao \cite{FMTZ} proved that $R(Q_3,Q_3)=7$.
\\

For $n\in\N$, the \textit{poset Erd\H{o}s-Hajnal number} $\widetilde{R}(\dot P,Q_n)$ of a colored poset $\dot P$ is the smallest $N\in\N$ such that every blue/red coloring of $Q_N$ contains a copy of $\dot P$, $\dot Q_n^{(b)}$, or $\dot Q_n^{(r)}$.
In other words, $\widetilde{R}(\dot P,Q_n)$ is the minimal $N$ such that any $\dot P$-free blue/red coloring of $Q_N$ contains a monochromatic copy of $Q_n$.
In this paper, we study the poset Erd\H{o}s-Hajnal number $\widetilde{R}(\dot P,Q_n)$ for a fixed colored poset $\dot P$, while $n$ is usually large. 

If $\dot P$ is monochromatic, then $\widetilde{R}(\dot P,Q_n)=R(P,Q_n)$ for large $n$.
This poset Ramsey setting has been addressed in multiple articles, see listed chronologically Lu and Thompson \cite{LT}, Gr\'osz, Methuku, and Tompkins \cite{GMT}, Axenovich and Winter \cite{QnV, QnK, QnN, QnPA}.
In this paper, we focus on colored posets $\dot P$ in which both colors occur. 

We say that $\dot P$ is \textit{diverse} if it contains two comparable vertices of distinct color. Otherwise, $\dot P$ is said to be \textit{non-diverse}.
Our first results provide general bounds for the poset Erd\H{o}s-Hajnal number of diverse and non-diverse $\dot P$, respectively.
The \textit{height} $h(P)$ of a poset $P$ is the size of the largest chain in $P$, and the $2$-dimension $\dim_2(P)$ of $P$ is the smallest $N$ such that $Q_N$ contains a copy of $P$.
It is a basic observation that the $2$-dimension is well-defined for any poset $P$.

\begin{theorem}\label{thm:EHgenFUL}
Let $\dot P$ be a diverse colored poset. Let $n\in\N$.
Then $$2n\le \widetilde{R}(\dot P,Q_n)\le h(P)n+\dim_2(P).$$
\end{theorem}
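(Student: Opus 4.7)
\emph{Plan.} The lower and upper bounds require separate arguments.

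For the lower bound $\tilde{R}(\dot P, Q_n) \ge 2n$, I will use the diversity hypothesis to exhibit a $\dot P$-free coloring of $Q_{2n-1}$ that also contains no monochromatic $Q_n$. Since $\dot P$ is diverse, after possibly swapping colors (which preserves $\tilde{R}(\dot P,Q_n)$) I may assume that $\dot P$ contains a blue vertex below a red vertex. Color $A \subseteq [2n-1]$ red when $|A| < n$ and blue otherwise. Every comparable bichromatic pair in this coloring then has red below blue, so no copy of $\dot P$ can embed. No monochromatic $Q_n$ appears either: an induced $Q_n$ contains a chain of $n+1$ elements whose extremes differ in size by at least $n$, whereas each color class spans only $n$ consecutive sizes.

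For the upper bound $\tilde{R}(\dot P, Q_n) \le h(P)n + \dim_2(P)$, set $h = h(P)$, $d = \dim_2(P)$, $N = hn+d$, and partition $[N] = T \cup D_1 \cup \cdots \cup D_h$ with $|T| = d$ and $|D_i| = n$. Fix a poset embedding $f\colon P \to 2^T$ (which exists by definition of $\dim_2$) and let $r\colon P \to \{0,\ldots,h-1\}$ be the rank function on $P$. For each $X \in P$, consider the $Q_n$-subcube
$$C_X \;=\; \Bigl\{ f(X) \cup \bigcup_{i\le r(X)} D_i \cup W \,:\, W \subseteq D_{r(X)+1} \Bigr\} \;\subseteq\; Q_N.$$
Given an arbitrary coloring $\chi$ of $Q_N$, either some $C_X$ is monochromatic---yielding the desired monochromatic $Q_n$---or every $C_X$ contains both colors. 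In the latter case, I will pick $W^X \subseteq D_{r(X)+1}$ so that $\chi(\phi(X)) = c_P(X)$ for $\phi(X) := f(X) \cup \bigcup_{i\le r(X)} D_i \cup W^X$; this choice is made independently for each $X$.

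The main obstacle is verifying that the resulting map $\phi\colon P \to Q_N$ is an induced-subposet embedding. Order preservation follows because $r(X) < r(Y)$ whenever $X <_P Y$, so each of the three summands of $\phi(X)$ sits inside the corresponding part of $\phi(Y)$. Order reflection is the delicate point and relies on the separation of coordinates: since $T$ is disjoint from every $D_i$ and from every $W^X$, the intersection $\phi(X) \cap T$ equals $f(X)$, so $\phi(X) \subseteq \phi(Y)$ forces $f(X) \subseteq f(Y)$ and hence $X \le_P Y$ because $f$ is a poset embedding into $2^T$. Together with the chosen colors, this produces a colored copy of $\dot P$ in $\chi$, completing the upper bound.
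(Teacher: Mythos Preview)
Your argument is correct and aligns with the paper's approach: the layered coloring of $Q_{2n-1}$ is exactly the construction the paper uses for the lower bound, and your blow-up construction for the upper bound (placing a copy of $Q_n$ at each vertex of an embedded $P$ and selecting one point of the right color from each) is the content of the cited Lemma~3 of Axenovich--Walzer, which the paper invokes without reproving. The only cosmetic point is that ``rank function'' should be understood as $r(X)=$ (length of a longest chain in $P$ with top element $X$) $-1$, since $P$ need not be graded; with that reading your key inequality $r(X)<r(Y)$ for $X<_P Y$ holds and the rest goes through.
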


\noindent This bound has a straightforward proof. Say that $\dot P$ contains a red vertex which is larger than some blue vertex.
The lower bound is obtained from a layered coloring of $Q_{2n-1}$, in which vertices $Z$ with $|Z|\le n-1$ are colored in red, and vertices $Z$ such that $|Z|\ge n$ in blue.
The upper bound follows from Lemma 3 in Axenovich and Walzer \cite{AW}. We omit the details.

We define the \textit{parallel composition} $\dot P_1 \opl \dot P_2$ of two colored posets $\dot P_1$ and $\dot P_2$ as the colored poset consisting of a copy of $\dot P_1$ and a copy of $\dot P_2$ that are \textit{parallel}, i.e., element-wise incomparable.
Observe that a colored poset $\dot P$ is non-diverse if and only if $P$ has subposets $P_b$ and $P_r$ such that $\dot P=\dot P_b^{(b)}\opl \dot P_r^{(r)}$.

\begin{theorem}\label{thm:EHgenNON}
Let $\dot P$ be a non-diverse poset. Let $P_r$ and $P_b$ such that $\dot P=\dot P_b^{(b)}\opl \dot P_r^{(r)}$.
Let $n\in\N$ with $n\ge \max\{\dim_2(P_b), \dim_2(P_r)\}$.
Then 
$$\max\{R(P_b,Q_n),R(P_r,Q_n)\}\le \widetilde{R}(\dot P,Q_n)\le \max\{R(P_b,Q_n),R(P_r,Q_n)\} +2.$$
\end{theorem}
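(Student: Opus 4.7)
The plan is to exploit the structural decomposition $\dot P = \dot P_b^{(b)}\sqcup \dot P_r^{(r)}$ together with a two-coordinate splitting of $Q_N$. Throughout, let $M = \max\{R(P_b,Q_n),R(P_r,Q_n)\}$; for the lower bound I assume without loss of generality that $M = R(P_b,Q_n)$.

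For the lower bound, by definition of the poset Ramsey number there exists a $2$-coloring of $Q_{M-1}$ with no blue copy of $P_b$ and no red copy of $Q_n$. Since $n$ is assumed large enough for $P_b$ to embed into $Q_n$, any blue copy of $Q_n$ in this coloring would contain a blue copy of $P_b$, a contradiction; hence there is no monochromatic $Q_n$. The absence of a blue $P_b$ alone already precludes any colored copy of $\dot P$, so $\tilde{R}(\dot P,Q_n) \ge M$.

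For the upper bound, set $N = M + 2$ and take an arbitrary $2$-coloring of $Q_N$. The key geometric observation is that the sub-cubes
\begin{align*}
\QQ_1 &= \{X\subseteq[N]\ :\ 1\in X,\ 2\notin X\},\\
\QQ_2 &= \{X\subseteq[N]\ :\ 1\notin X,\ 2\in X\}
\end{align*}
are each isomorphic to $Q_{N-2}$, and every vertex of $\QQ_1$ is incomparable to every vertex of $\QQ_2$, since the former contain $1$ but not $2$ and vice versa. Restricting the coloring to $\QQ_1$, the bound $R(P_b,Q_n)\le M$ yields either a blue copy of $P_b$ or a red copy of $Q_n$. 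Restricting to $\QQ_2$, the color-swap symmetry $R(P_r,Q_n) = R(Q_n,P_r)$ combined with $R(P_r,Q_n)\le M$ produces either a blue copy of $Q_n$ or a red copy of $P_r$. If a monochromatic $Q_n$ appears in either sub-cube, we are done; otherwise the blue $P_b\subseteq\QQ_1$ and the red $P_r\subseteq\QQ_2$ are vertex-wise incomparable and together form the required colored copy of $\dot P=\dot P_b^{(b)}\sqcup\dot P_r^{(r)}$.

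I do not anticipate a genuine obstacle. The only delicate point is the verification that $\QQ_1$ and $\QQ_2$ are mutually incomparable, which is immediate from the choice of the two pinned coordinates. The argument further relies only on the standing assumption that $n$ is large enough for $P_b$ and $P_r$ to embed into $Q_n$, and on the color-swapping symmetry of poset Ramsey numbers.
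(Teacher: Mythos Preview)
Your proof is correct and follows essentially the same approach as the paper. For the upper bound, your sub-cubes $\QQ_1$ and $\QQ_2$ are exactly the paper's sublattices $\QQ\big|_{\{1\}}^{[N]\setminus\{2\}}$ and $\QQ\big|_{\{2\}}^{[N]\setminus\{1\}}$, and the argument is identical; for the lower bound, the paper simply declares it ``trivial,'' while you spell out the argument and correctly flag that it relies on $n\ge\dim_2(P_b),\dim_2(P_r)$ so that a monochromatic $Q_n$ forces a monochromatic copy of the relevant part.
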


\noindent 
For a fixed poset $P$, Axenovich and Walzer~\cite{AW} showed that $R(P,Q_n)=O(n)$, so Theorems~\ref{thm:EHgenFUL} and \ref{thm:EHgenNON} imply that $\widetilde{R}(\dot P,Q_n)=O(n)$ for any fixed $\dot P$. 
Recall that without forbidding $\dot P$, the best known upper bound is $R(Q_n,Q_n)=O(n^2)$, which is quadratic rather than linear. 
In that respect, our results confirm an analogue of the Erd\H{o}s-Hajnal conjecture for posets.

The simplest non-diverse colored poset is an \textit{antichain} $A_t$, i.e., a poset consisting of $t$ pairwise incomparable vertices. 
We precisely determine the Erd\H{o}s-Hajnal number for antichains.

\begin{theorem}\label{thm:EHantichain}
Let $\dot A$ be a non-monochromatic antichain on at least $2$ vertices. Let $n$ be sufficiently large.
If there are no three vertices of the same color in $\dot A$, then $\widetilde{R}(\dot A, Q_n)=n+2$.
Otherwise, $\widetilde{R}(\dot A, Q_n)=n+3$.
\end{theorem}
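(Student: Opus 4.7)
The plan is to prove matching upper and lower bounds for each case. The key structural lemma for the lower bounds is that every copy of $Q_n$ in $Q_{n+1}$ contains $\emptyset$ as its minimum or $[n+1]$ as its maximum, since such a copy lies in an interval $[X,Y]\subseteq Q_{n+1}$ with $|Y\setminus X|\geq n$, forcing $|X|=0$ or $|Y|=n+1$. For the Case 1 lower bound, I would color $\emptyset$ and $[n+1]$ red and all other vertices of $Q_{n+1}$ blue. Every red vertex is comparable to every other vertex, so no bichromatic antichain exists and no $\dot A$-copy arises, independently of the specific sizes $|A_b|,|A_r|\in\{1,2\}$. There is no monochromatic $Q_n$: red has only two comparable elements, and any $Q_n$ in blue would contain the red vertex $\emptyset$ or $[n+1]$. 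This yields $\tilde R(\dot A,Q_n)\geq n+2$.

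For the Case 2 lower bound, assume without loss of generality $|A_b|\geq 3$. In $Q_{n+2}$ I would color the union of two opposite maximal chains $C_1\colon\emptyset\subset\{1\}\subset\cdots\subset[n+2]$ and $C_2\colon\emptyset\subset\{n+2\}\subset\{n+1,n+2\}\subset\cdots\subset[n+2]$ blue and the remaining vertices red. The blue set is a union of two chains, so it contains no antichain of size three, ruling out $\dot A$, and for $n\geq 3$ the blue set cannot contain $Q_n$ since $Q_n$ has antichains larger than two. Red contains no $Q_n$: every $Q_n$-copy in $Q_{n+2}$ is either a rectangle interval $[X,Y]$ with $|X|\in\{0,2\}$ (automatically containing $\emptyset$ or $[n+2]$, both blue) or of the critical form $[\{x\},[n+2]\setminus\{y\}]$, and avoiding $C_1$ forces $x>y$ while avoiding $C_2$ forces $x<y$, a contradiction. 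Non-rectangle copies reduce to this analysis by a recursive application of the structural lemma. This gives $\tilde R(\dot A,Q_n)\geq n+3$.

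For the upper bounds, the sub-case $\dot A=\dot A_1^{(b)}\sqcup\dot A_1^{(r)}$ of Case 1 follows immediately from Theorem~\ref{thm:EHgenNON}, which yields $\tilde R(\dot A,Q_n)\leq R(A_1,Q_n)+2=n+2$. For the remaining Case 1 sub-cases and for Case 2, the bound from Theorem~\ref{thm:EHgenNON} is too weak, depending on $R(A_t,Q_n)$ with $t=\max\{|A_b|,|A_r|\}$. I would argue directly in $Q_{n+k}$ for $k\in\{2,3\}$: the absence of a monochromatic $Q_n$ forces both colors to contain antichains of the required sizes, and the richness of incomparable pairs at middle levels allows these antichains to be selected parallel. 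The main obstacle I anticipate is the Case 2 upper bound uniformly in $|A_b|$: the bound $\tilde R(\dot A,Q_n)\leq R(A_{|A_b|},Q_n)+2$ a priori grows with $|A_b|$, so I must exploit $n$ being sufficiently large to conclude that in any coloring of $Q_{n+3}$ with no monochromatic $Q_n$, each color admits antichains of any fixed size, and that antichains of sizes $|A_b|$ and $|A_r|$ in the two colors can be selected parallel via a density or pigeonhole argument on the middle layer of $Q_{n+3}$, where both colors must be substantially represented.
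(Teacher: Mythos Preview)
Your lower-bound constructions are correct. For Case~1 you essentially recover the paper's construction (Lemma~\ref{lem:EHantichain1} with $t=s=1$). For Case~2 your two-opposite-chains coloring of $Q_{n+2}$ is a valid alternative to the paper's route, which simply quotes $R(A_3,Q_n)\ge n+3$ from~\cite{QnPA} via Theorem~\ref{thm:EHgenNON}. One remark: once the minimum $X$ and maximum $Y$ of a red $Q_n$-copy satisfy $|X|=1$ and $|Y|=n+1$, the copy \emph{equals} the interval $[X,Y]$ (both have exactly $2^n$ elements and the copy sits inside the interval), so no ``recursive reduction'' is needed.

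The genuine gaps are in the upper bounds. For $\dot A_2^{(b)}\sqcup\dot A_2^{(r)}$---which by monotonicity is the only hard sub-case of Case~1---you offer nothing beyond ``richness of incomparable pairs at middle levels''. The paper's argument (Lemma~\ref{lem:EHantichain2}) is not soft: it first reduces to the situation where every interior layer of $Q_{n+2}$ is ``almost'' one fixed colour, then takes a closest incomparable blue pair $X,Y$ and performs a three-way case split on $(|X|,|Y|)$ to produce either the pattern or a red sublattice of dimension $n$. Some structural argument of this flavour is required; a density observation on one layer does not settle it.

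For Case~2 your middle-layer heuristic is false as stated. Colour $Z\in Q_{n+3}$ red if $|Z|\le n-1$ and blue otherwise; for $n\ge 4$ there is no monochromatic $Q_n$, yet the middle layer is entirely red, so ``both colours substantially represented on the middle layer'' fails and you cannot extract the parallel antichains from there. The paper (Lemma~\ref{lem:EHantichain3}) proceeds differently: it invokes $R(A_t,Q_n)=n+3$ to obtain a large red antichain $\cA'$ and a large blue antichain $\cB'$ somewhere in $Q_{n+3}$, then iteratively selects $s$ vertices $Z_i$ from each together with witnesses $a_i\in Z_i$, $x_i\notin Z_i$ satisfying $a_i\neq x_j$ for all $i,j$. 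The sublattice between $\{x_i\}$ and $[N]\setminus\{a_i\}$ is then parallel to every selected $Z_i$, and a Sperner antichain inside it contains $s$ vertices of one colour, which combine with the $Z_i$ of the complementary colour to form $\dot A_s^{(b)}\sqcup\dot A_s^{(r)}$. This witness-selection trick for manufacturing a parallel sublattice is the idea your outline is missing.
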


\noindent In particular, $\widetilde{R}({\dot A_2^{(b)}}\opl {\dot A_2^{(r)}},Q_n)=n+2=R(A_2,Q_n)$, and $\widetilde{R}({\dot A_1^{(b)}}\opl {\dot A_1^{(r)}},Q_n)=n+2=R(A_1,Q_n)+2$, 
which attain the lower and upper bound in Theorem \ref{thm:EHgenNON}, respectively.
We do not attempt to determine the smallest $n$ for which this bound holds. In our proof, we require $\log \log \log n=\Omega(|A|)$.
\\

A \textit{chain} $C_t$ is a poset on $t$ pairwise comparable vertices.
For colored chains, we introduce two specific colorings.
The \textit{red-alternating chain} $\dot C_t^{(rbr)}$ is the chain~$C_t$ whose vertices are colored alternatingly in red and blue, such that the minimal vertex is red, see Figure \ref{fig:coloredQ2} for an illustration.
Similarly, the \textit {blue-alternating chain} $\dot C_t^{(brb)}$ is the chain~$C_t$ colored alternatingly, but the minimal vertex is blue.

Given a colored chain $\dot C$, let $\lambda(\dot C)$ be the largest integer $\ell$ such that $\dot C$ contains a copy of $\dot C_\ell^{(rbr)}$ or $\dot C_\ell^{(brb)}$.
Theorem \ref{thm:EHgenFUL} implies that $\widetilde{R}(\dot C,Q_n)$ is linear in terms of $n$.
Our next result shows that the poset Erd\H{o}s-Hajnal number of any colored chain~$\dot C$ is determined by the poset Erd\H{o}s-Hajnal number of an alternating chain, up to an additive term independent of~$n$.

\begin{theorem}\label{thm:EHreduction}
Let $n\in\N$. Let $\dot C_t$ be a colored chain of length $t$, and let $\lambda=\lambda(\dot C_t)$. Then
$$\widetilde{R}(\dot C^{(rbr)}_{\lambda},Q_n)\le \widetilde{R}(\dot C_t,Q_n)\le \widetilde{R}(\dot C^{(rbr)}_{\lambda},Q_n)+t-\lambda.$$
\end{theorem}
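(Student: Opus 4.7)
Plan. For the lower bound, by the definition of $\alpha = \alpha(\dot C_t)$, the chain $\dot C_t$ contains $\dot C^{(rbr)}_\alpha$ or $\dot C^{(brb)}_\alpha$ as a colored induced subposet. Since swapping red and blue is a bijection on colorings of $Q_N$ that preserves the no-monochromatic-$Q_n$ condition, the Erd\H{o}s-Hajnal number is color-symmetric, that is, $\tilde{R}(\dot C^{(rbr)}_\alpha, Q_n) = \tilde{R}(\dot C^{(brb)}_\alpha, Q_n)$. The lower bound then follows by transitivity of colored subposet containment: any host $Q_N$ that contains a copy of $\dot C_t$ contains a copy of $\dot C^{(rbr)}_\alpha$ up to color swap.

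For the upper bound, I would proceed by induction on $s := t - \alpha \ge 0$. The base $s = 0$ is immediate from the color symmetry of $\tilde R$ above. In the inductive step, select a block $j$ of $\dot C_t$ with $m_j \ge 2$ and let $\dot C_{t-1}'$ be the chain obtained by deleting one vertex from block $j$; since $m_j - 1 \ge 1$, the alternating block structure is preserved, so $\alpha(\dot C_{t-1}') = \alpha$. By the induction hypothesis, $\tilde R(\dot C_{t-1}', Q_n) \le N_0 + (s-1)$, where $N_0 := \tilde R(\dot C^{(rbr)}_\alpha, Q_n)$. Given any coloring of $Q_{N_0+s}$ without monochromatic $Q_n$, fix a coordinate $e = N_0+s$, restrict to the sub-lattice $L = \{X : e \notin X\} \cong Q_{N_0+s-1}$, and apply the induction hypothesis to obtain a colored copy $Y_1 \subset \cdots \subset Y_{t-1}$ of $\dot C_{t-1}'$ in $L$.

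To extend this to a copy of $\dot C_t$, I would employ a chain-shift using $e$: if position $p$ of $\dot C_t$ was removed when forming $\dot C_{t-1}'$, consider the candidate chain
\[ Y_1 \subset \cdots \subset Y_{p-1} \subset Y_{p-1} \cup \{e\} \subset Y_p \cup \{e\} \subset \cdots \subset Y_{t-1} \cup \{e\}. \]
This always has length $t$, and its colors match $\dot C_t$ precisely when $c(Y_i \cup \{e\}) = c(Y_i)$ for every $i \in \{p-1, \ldots, t-1\}$.

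The main obstacle will be controlling these $t-p+1$ color equalities. I plan to handle it by first using the complement automorphism $X \mapsto [N_0+s] \setminus X$ to reverse $\dot C_t$ when advantageous, so that WLOG a large block sits at an extreme position whenever the block structure permits; then $p$ can be taken at the top, reducing the constraints to the single equality $c(Y_{t-1} \cup \{e\}) = c(Y_{t-1})$. This single equality can be ensured by producing several distinct copies of $\dot C_{t-1}'$ in $L$ (by restricting $L$ to various codimension-one sub-lattices, each of which still has no monochromatic $Q_n$) and applying a pigeonhole argument on their tops. The residual case, where only a middle block has size $\ge 2$, appears to require a more intricate argument combining the two halves $L$ and $U = \{X : e \in X\}$ in a contradiction proof, wherein the persistent failure of any shift to succeed would force a monochromatic $Q_n$ in $Q_{N_0+s}$, contradicting the hypothesis.
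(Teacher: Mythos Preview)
Your lower bound is fine and matches the paper's argument.

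The upper bound, however, has a genuine gap. The chain-shift step requires that $c(Y_i\cup\{e\})=c(Y_i)$ for a whole range of indices, and even in the best case (deleting from an extreme block) you still need $c(Y_{t-1}\cup\{e\})=c(Y_{t-1})$. Your proposed pigeonhole fix does not actually pin this down: running the induction over different coordinates $e$ produces possibly different chains with possibly different tops $Y_{t-1}^{(e)}$, and there is no finite collision forcing some $Y_{t-1}^{(e)}\cup\{e\}$ to have the correct color. Restricting $L$ further to codimension-one sub-lattices drops you below the inductive threshold $N_0+s-1$, so the hypothesis no longer applies. The ``residual case'' (only a middle block is long) is not an edge case you can defer; it is the generic situation, and the sketch ``persistent failure would force a monochromatic $Q_n$'' is not an argument---nothing in your setup links the failure of these shifts to the existence of a large monochromatic cube.

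The paper takes a completely different route that avoids any extension step. It introduces a \emph{phase function} $\varphi(X)\in[t]$ recording the longest initial segment of $\dot C_t$ realizable below $X$, and proves (Proposition~\ref{prop:phase_color}) that in a $\dot C_t$-free coloring every vertex of phase $i$ has color $\bar c_i$, except possibly the minimal elements $M_i$ of phase~$i$ when $c_i=c_{i-1}$. Thus the ``bad'' vertices lie in $P=\bigcup_{i:\,c_i=c_{i-1}} M_i$, a union of $t-\alpha$ antichains. Corollary~\ref{cor:chain} then yields a sub-Boolean lattice of dimension $N-(t-\alpha)$ disjoint from $P$, and on that sub-lattice the phase-color correspondence forces any alternating chain to have length at most $\alpha-1$. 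This structural viewpoint---classifying all vertices by phase and peeling off $t-\alpha$ antichains in one shot---is exactly the idea your induction is missing.
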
 

\noindent For alternating chains, we give the following bounds.
\smallskip
\begin{theorem}\label{thm:EHchain}
For every $n$,\:\:$\widetilde{R}(\dot C^{(rbr)}_{2},Q_n)=\widetilde{R}(\dot C^{(rbr)}_{3},Q_n)=2n$. 
For $t\ge 4$ and sufficiently large $n$, $$2.02n< \widetilde{R}(\dot C^{(rbr)}_{t},Q_n)\le (t-1)n.$$
\end{theorem}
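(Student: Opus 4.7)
The proof splits naturally into four parts: the lower and upper bounds for $t \in \{2,3\}$ and those for $t \geq 4$. The lower bound $2n$ for $t \in \{2,3\}$ is immediate from Theorem \ref{thm:EHgenFUL} since $\dot C_t^{(rbr)}$ is diverse. The lower bound $2.24n$ for $t \geq 4$ I would obtain from the construction the paper uses to establish the corollary $R(Q_n,Q_n) > 2.24n$ announced in the abstract: any coloring of $Q_N$ avoiding a monochromatic $Q_n$ and a copy of $\dot C_4^{(rbr)}$ automatically avoids $\dot C_t^{(rbr)}$ for all $t \geq 4$, so the same coloring yields the lower bound uniformly in $t$.

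The upper bound at $t = 2$ is the easy ideal/filter argument: forbidding $\dot C_2^{(rbr)}$ forces the red set to be an up-set, so along any maximal chain one side of the blue-then-red transition has length at least $n+1$ and the corresponding interval is a monochromatic $Q_n$. For $t \geq 4$ I would induct on $t$ via
$$
\tilde R(\dot C_t^{(rbr)},Q_n) \;\leq\; \tilde R(\dot C_{t-1}^{(brb)},Q_n) + n
$$
and its color-symmetric twin, starting from $\tilde R(\dot C_3^{(rbr)},Q_n) = \tilde R(\dot C_3^{(brb)},Q_n) = 2n$. To prove the step, set $N$ equal to the right-hand side, fix an $n$-subset $A \subseteq [N]$, and apply the inductive hypothesis to the filter $\{Z : Z \supseteq A\} \cong Q_{N-n}$. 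Either this filter contains a monochromatic $Q_n$ (done), or it contains a copy $Z_1 < \cdots < Z_{t-1}$ of $\dot C_{t-1}^{(brb)}$ with $Z_1$ blue and $Z_1 \supseteq A$. If some $W \subsetneq Z_1$ is red, prepending $W$ produces a copy of $\dot C_t^{(rbr)}$; otherwise the ideal $[\emptyset,Z_1]$ is entirely blue and, since $|Z_1| \geq n$, contains a blue $Q_n$.

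The crux is thus the upper bound $\tilde R(\dot C_3^{(rbr)},Q_n) \leq 2n$, which cannot be derived from the $t = 2$ base by the same recursion without an additive loss of $n$. Forbidding $\dot C_3^{(rbr)}$ is equivalent to requiring that the red set $R$ be convex. A short case analysis on $c(\emptyset)$ and $c([2n])$ reduces to both being blue; then any blue $X$ at level $n$ can have neither a red above nor a red below (else an $\dot C_3^{(rbr)}$ chain appears), and each alternative yields a blue $Q_n$ as $[\emptyset,X]$ or $[X,[2n]]$, so one may assume level $n$ is entirely red. The blue vertices then split into a blue down-set $B$ (blues with a red above), a blue up-set $T$ (blues with a red below), and a set $I_b$ incomparable to $R$; the no monochromatic $Q_n$ assumption forces $I_b = \emptyset$ and pins $B \subseteq \{|Y| \leq n-1\}$, $T \subseteq \{|Y| \geq n+1\}$. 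The finishing move, which I expect to be the main technical obstacle, is to assemble a blue $Q_n$ as a non-interval copy that skips level $n$: take a maximal $Y_0 \in B$ at level $n-1$ and use its singletons together with $[2n] \setminus Y_0$ as generators, so that the bottom half lies in $[\emptyset,Y_0] \subseteq B$ and the top half in the filter above $[2n]\setminus Y_0$. The copy is blue provided $[2n]\setminus Y_0 \in T$, and the heart of the proof is choosing $Y_0$ (or a variant of the generator sizes) for which this complementary vertex is indeed blue.
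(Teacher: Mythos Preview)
Your lower bounds and the inductive step for $t\ge 4$ are correct and essentially match the paper (the paper's Lemma~\ref{lem:EHchain2} inducts on the ideal $\QQ\big|_\varnothing^Z$ with $|Z|=N-n$ rather than on a filter, which is the dual of your argument). The $2.24n$ bound is indeed obtained from a $\dot C_4^{(rbr)}$-free construction (Lemma~\ref{lem:EHchain3}), exactly as you say.

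The genuine gap is the upper bound $\tilde R(\dot C_3^{(rbr)},Q_n)\le 2n$. Your structural reductions (red is convex; reduce to $\varnothing,[2n]$ blue; reduce to layer $n$ entirely red; split the blue vertices into a down-set $B$ and an up-set $T$) are all valid, but the finishing construction you propose need not exist. You require a blue $Y_0$ at layer $n-1$ with $[2n]\setminus Y_0$ also blue, and neither is guaranteed. Concretely, take the layered coloring of $Q_{2n}$ in which layers $n-1,n,n+1$ are red and all other layers are blue: this is $\dot C_3^{(rbr)}$-free, yet there is no blue vertex at layer $n-1$ at all, so your construction cannot start. A blue $Q_n$ does exist here, but finding it requires an adaptive choice of generator sizes, and carrying this out in general amounts to reproving Corollary~\ref{cor:chain}.

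The paper avoids all of this structure with a two-line argument. Consider the longest \emph{red} chain in $Q_{2n}$. If its length is at most $n$, then Corollary~\ref{cor:chain} (with colors swapped) yields a blue copy of $Q_n$ directly. If its length is at least $n+1$, let $A\subset B$ be its endpoints; then the sublattice $\QQ\big|_A^B$ has dimension at least $n$, and either it is entirely red, giving a red $Q_n$, or it contains a blue vertex $Z$, in which case $A\subset Z\subset B$ is a copy of $\dot C_3^{(rbr)}$. The tool you are missing is Corollary~\ref{cor:chain}; once you invoke it, the convexity analysis becomes unnecessary.
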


\noindent The lower bound on $\widetilde{R}(\dot C^{(rbr)}_{t},Q_n)$ shows the existence of a blue/red coloring of $Q_{2.02n}$ with no monochromatic $Q_n$. 

\begin{corollary}\label{cor:QnQnLB}
For sufficiently large $n$,\:\:$R(Q_n,Q_n)> 2.02n$.
\end{corollary}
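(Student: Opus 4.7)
The plan is that this is essentially immediate from Theorem \ref{thm:EHchain}. First I would record the following trivial monotonicity: for every colored poset $\dot P$,
$$\tilde{R}(\dot P,Q_n)\le R(Q_n,Q_n).$$
Indeed, if $N=R(Q_n,Q_n)$, then any $2$-coloring of $Q_N$ contains a monochromatic induced copy of $Q_n$, and hence contains a copy of $\dot P$, $\dot Q_n^{(b)}$, or $\dot Q_n^{(r)}$; thus $N$ already witnesses the definition of $\tilde{R}(\dot P,Q_n)$.

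Next I would apply this inequality to the colored poset $\dot C^{(rbr)}_{4}$, an alternating chain of length $4$ starting red. By the lower bound in Theorem \ref{thm:EHchain}, we have $\tilde{R}(\dot C^{(rbr)}_{4},Q_n)>2.24n$ for sufficiently large $n$, so
$$R(Q_n,Q_n)\ge \tilde{R}(\dot C^{(rbr)}_{4},Q_n)>2.24n,$$
as required.

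Equivalently, one can read off the conclusion directly from the construction underlying Theorem \ref{thm:EHchain}: that argument exhibits a $2$-coloring of some $Q_N$ with $N>2.24n$ containing neither a colored copy of $\dot C^{(rbr)}_{4}$ nor a monochromatic induced $Q_n$; the second property alone gives $R(Q_n,Q_n)>N>2.24n$. There is no real obstacle here since all the work is hidden in Theorem \ref{thm:EHchain}; the only content of the corollary is the monotonicity observation above.
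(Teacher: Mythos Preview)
Your proposal is correct and matches the paper's own justification: the paper simply notes that the lower bound $\tilde{R}(\dot C^{(rbr)}_{4},Q_n)>2.24n$ from Theorem~\ref{thm:EHchain} gives a coloring of $Q_{2.24n}$ with no monochromatic $Q_n$, which is exactly your monotonicity observation $\tilde{R}(\dot P,Q_n)\le R(Q_n,Q_n)$ specialized to $\dot P=\dot C^{(rbr)}_{4}$.
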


\begin{figure}[h]
\centering\label{fig:coloredQ2}
\includegraphics[scale=0.62]{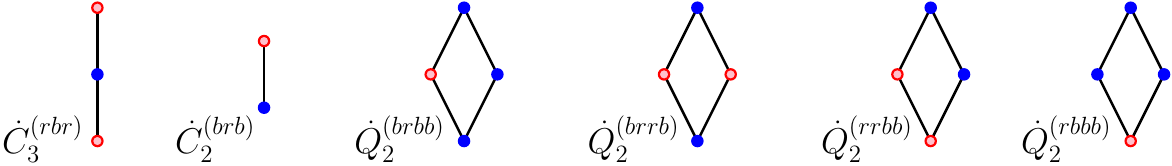}
\caption{Alternating chains and non-monochromatic colorings of $Q_2$.}
\end{figure}

Finally, we analyze the poset Erd\H{o}s-Hajnal number of small colored Boolean lattices.
Up to permutation of colors, the only non-monochromatic blue/red coloring of $Q_1$ is $\dot C^{(rbr)}_{2}$.
Theorem \ref{thm:EHchain} shows that $\widetilde{R}(\dot C^{(rbr)}_{2},Q_n)=2n$.
Moreover, we give bounds on $\widetilde{R}(\dot Q_2,Q_n)$ for every non-monochromatic blue/red coloring of~$Q_2$.
Up to symmetry and permutation of colors, the four non-monochromatic $Q_2$ are $\dot Q_2^{(brbb)}$, $\dot Q_2^{(brrb)}$, $\dot Q_2^{(rrbb)}$, and $\dot Q_2^{(rbbb)}$, each with the respective coloring as illustrated in Figure \ref{fig:coloredQ2}.

\begin{theorem}\label{thm:EHbool}
For every $n\in\N$,\:\:$\widetilde{R}(\dot Q_2^{(brbb)},Q_n)=\widetilde{R}(\dot Q_2^{(brrb)},Q_n)=\widetilde{R}(\dot Q_2^{(rrbb)},Q_n)=2n$, and
$2n\le \widetilde{R}(\dot Q_2^{(rbbb)},Q_n)\le 2n + O\big(\tfrac{n}{\log n}\big).$
\end{theorem}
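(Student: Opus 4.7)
The lower bound $\tilde R(\dot Q_2^{(\cdot)}, Q_n) \ge 2n$ is common to all four colorings: each contains a comparable pair with red below blue, hence contains $\dot C_2^{(rbr)}$ as a colored induced subposet. By monotonicity of $\tilde R$ under colored subposet containment combined with Theorem~\ref{thm:EHchain}, $\tilde R(\dot Q_2^{(\cdot)}, Q_n) \ge \tilde R(\dot C_2^{(rbr)}, Q_n) = 2n$. An explicit witness is the layered coloring of $Q_{2n-1}$ with blue on layers $0, \ldots, n-1$ and red on layers $n, \ldots, 2n-1$, which contains neither a $\dot C_2^{(rbr)}$ (as every red strictly contains every blue in size) nor a monochromatic $Q_n$ (each monochromatic half admits chains of length at most $n$).

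For the three tight upper bounds I fix a coloring $c$ of $Q_{2n}$ with no monochromatic $Q_n$ and aim to locate a copy of each specified $\dot Q_2$. By Theorem~\ref{thm:EHchain}, $c$ contains both a $\dot C_2^{(rbr)}$ and a $\dot C_3^{(brb)}$ (and their color-reversals). For $\dot Q_2^{(rrbb)}$, start from a pair red $A \subsetneq B$ blue; avoidance of $\dot Q_2^{(rrbb)}$ with $A, B$ as min and max forces every red in the open interval $(A, B)$ to be comparable to every blue there. Choosing $(A, B)$ extremally, e.g.\ minimizing $|B|-|A|$ among red-below-blue pairs, this bipartite-comparability restriction on $[A, B] \cong Q_{|B|-|A|}$ should yield a contradiction with the no-mono-$Q_n$ hypothesis. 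For $\dot Q_2^{(brrb)}$ and $\dot Q_2^{(brbb)}$, start from a $\dot C_3^{(brb)}$ chain $b_1 < r < b_2$; avoidance of the target forces the other reds (resp.\ blues) in $(b_1, b_2)$ to be pairwise comparable (resp.\ comparable to $r$), reducing the problem to a chain-constrained subposet of $[b_1, b_2]$ from which a mono $Q_n$ must be extractable. The main technical step is converting the structural restriction on the interval (bipartite comparability or chain-of-one-color restriction) into an actual $Q_n$-copy of one color, despite $[A, B]$ being potentially only slightly larger than $Q_n$.

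For $\dot Q_2^{(rbbb)}$, the extra $O(n/\log n)$ slack requires a finer argument. The avoidance condition translates to: for every red $X$, the set of blue elements strictly above $X$ does not contain two incomparable elements with a common blue upper bound; equivalently, above each red the induced blue sub-poset has a severely restricted ``tree-like'' structure. Let $N = 2n + f(n)$ with $f(n) = O(n/\log n)$ and suppose $c$ avoids both $\dot Q_2^{(rbbb)}$ and mono $Q_n$. The plan is to combine the local restriction above each red with a level-averaging argument over $Q_N$: the tree-like constraint caps the density of blues in any upper cone $[X, \top] \cong Q_{N-|X|}$ by a quantitative bound drawn from extremal results on induced $Q_2$-free subfamilies of Boolean lattices, so at some level of $Q_N$ reds dominate sharply enough to extract a monochromatic red $Q_n$ via a standard chain-product embedding. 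The key difficulty I anticipate is aligning the additive $O(n/\log n)$ slack with the available quantitative density bound, which will determine the precise constant in $f(n)$.
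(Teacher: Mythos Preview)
Your lower bound argument is correct and matches the paper's, which simply invokes Theorem~\ref{thm:EHgenFUL}.

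For the three tight upper bounds, however, your proposal is only a plan, and the plan has gaps. For $\dot Q_2^{(rrbb)}$ you suggest taking a red--blue pair $A\subsetneq B$ minimizing $|B|-|A|$, but this minimum can be $1$, in which case the interval $[A,B]$ carries no useful structure; nothing in your sketch forces $|B|-|A|$ to be large. The paper proceeds quite differently: it cases on whether the red vertices in layers $0,\dots,n$ contain a $2$-chain. If so, it applies Lemma~\ref{lem:chain} (the $\cY$-chain lemma) to a sublattice above the smaller red to produce either a red $Q_n$ or two blue vertices with the right incomparability. If not, the reds in the lower half form an antichain and a layer-counting argument yields a monochromatic $Q_n$. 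Similarly, for $\dot Q_2^{(brrb)}$ the paper's second case (no blue pair at distance $\ge n+1$) requires a bespoke $\cX$-good embedding $\phi\colon\QQ(\cX)\to\QQ$ built from an antichain $\cF$; your ``reducing the problem to a chain-constrained subposet'' does not indicate how to extract a $Q_n$, which is the entire difficulty. For $\dot Q_2^{(brbb)}$ the paper takes a longest blue chain and works inside the sublattice between its endpoints, again not along your $\dot C_3^{(brb)}$ route.

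For $\dot Q_2^{(rbbb)}$ you are overcomplicating matters. Your observation that above any red the blue vertices form a $\cLa$-free family is exactly right, but rather than a density or level-averaging argument, the paper uses it as a one-step reduction to the known bound $R(\cLa,Q_n)\le n+O(n/\log n)$ from \cite{QnV}: take a minimal red $R$; if $|R|>n$ the sublattice below $R$ is a blue $Q_n$, otherwise pick $A\supset R$ with $|A|=|R|+1$ and apply the Ramsey bound in $\QQ\big|_A^{[N]}$, which has dimension at least $R(\cLa,Q_n)$. This yields either a red $Q_n$ or a blue $\cLa$ that, together with $R$, forms the desired $\dot Q_2^{(rbbb)}$. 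The $O(n/\log n)$ slack is precisely what the cited result supplies, so no new quantitative work is needed.
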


We omit the proof of Theorem \ref{thm:EHbool} here. For a full proof, the reader is referred to \cite{Winter-diss}.
\\

The article is structured as follows.
In Section \ref{sec:EHgen}, we introduce notation and preliminary lemmas.
In Section \ref{sec:EHnoncolorful}, we study non-diverse posets and prove Theorems \ref{thm:EHgenNON} and \ref{thm:EHantichain}.
Afterwards, in Section \ref{sec:EHchain}, we focus on chains and present proofs for Theorems \ref{thm:EHreduction} and \ref{thm:EHchain}.


\section{Preliminaries}\label{sec:EHgen}

\subsection{Basic Notation}

Let $[n]=\{1,\dots,n\}$ for every $n\in\N$. In this paper `$\log$' always refers to the logarithm with base $2$.
We omit floors and ceilings where appropriate.
We denote by $\QQ(\bZ)$ the Boolean lattice with ground set $\bZ$, i.e., the poset of subsets of $\bZ$ ordered by inclusion.
For $\ell\in\{0,\dots,|\bZ|\}$, \textit{layer $\ell$} of $\QQ(\bZ)$ refers to the subposet $\{Z\in\QQ(\bZ): ~ |Z|=\ell\}$.
Note that every layer of the Boolean lattice is an antichain.
Given a Boolean lattice $\QQ$ and vertices $A,B\in\QQ$ with $A\subseteq B$, 
the \textit{sub-Boolean lattice}, or \textit{sublattice} for short, between $A$ and $B$ is
$$\QQ\big|_A^B =\{X\in\QQ : A\subseteq X \subseteq B\}.$$
This subposet is isomorphic to a Boolean lattice of dimension $|B|-|A|$.
Note that a copy of a Boolean lattice in $\QQ$ is not necessarily a sublattice.

\subsection{Red Boolean lattice versus blue chain}

Let $\bX$ and $\bY$ be non-empty, disjoint sets and let $k=|\bY|$.
We denote a linear ordering $\tau$ of $\bY$ where $y_1<_\tau y_2<_\tau \dots <_\tau y_k$ by a sequence $\tau=(y_1,\dots,y_k)$.
Fix a linear ordering $\tau=(y_1,\dots,y_k)$ of $\bY$.
A \textit{$\bY$-chain} corresponding to $\tau$ is a $(k+1)$-element chain in the Boolean lattice $\QQ(\bX\cup\bY)$ on vertices
$$X_0\cup \varnothing,X_1\cup\{y_1\},X_2\cup\{y_1,y_2\},\dots,X_{k}\cup \bY,$$
 where $X_0 \subseteq X_1 \subseteq\dots \subseteq X_k\subseteq \bX$. 
The following result was proved implicitly by Chen, Cheng, Li and Liu, see Theorem 15 of \cite{CCLL}, as well as by Gr\'osz, Methuku and Tompkins, see Claim 3 of  \cite{GMT}. For an alternative proof, see \cite{QnV}.

\begin{lemma}[\cite{CCLL}]\label{lem:chain}\label{chain_lem}
Let $\bX$ and $\bY$ be disjoint sets with $|\bX|=n$ and $|\bY|=k$. Let $\QQ(\bX\cup\bY)$ be a colored Boolean lattice.
Fix a linear ordering $\tau=(y_1,\dots,y_k)$ of $\bY$. Then there exists either a red copy of $Q_n$, or a blue $\bY$-chain corresponding to $\tau$ in $\QQ(\bX\cup\bY)$.
\end{lemma}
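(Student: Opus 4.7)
My approach is induction on $n = |\cX|$, with $\cY$, $\tau$, and the coloring allowed to vary. The base case $n = 0$ is direct: since $\cX = \varnothing$, the only candidate for a $\cY$-chain corresponding to $\tau$ is $\varnothing \subset \{y_1\} \subset \cdots \subset \cY$. If every one of its $k+1$ vertices is blue then this chain is the desired $\cY$-chain; otherwise some vertex is red and constitutes a red copy of $Q_0$.

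For the inductive step $n \geq 1$, pick any $x \in \cX$, set $\cX' = \cX \setminus \{x\}$, and define an auxiliary $2$-coloring $c^*$ on $\QQ(\cX' \cup \cY)$ by declaring $c^*(Z)$ to be red if and only if both $Z$ and $Z \cup \{x\}$ are red in the original coloring. Applying the inductive hypothesis to $(\QQ(\cX' \cup \cY), c^*)$ with the same ordering $\tau$ yields either (i) a $c^*$-red copy of $Q_{n-1}$, realised by an order-preserving embedding $\phi\colon Q_{n-1} \to \QQ(\cX' \cup \cY)$, or (ii) a $c^*$-blue $\cY$-chain $Z_0 \subseteq Z_1 \subseteq \cdots \subseteq Z_k$ in $\QQ(\cX' \cup \cY)$.

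In case (i), every $\phi(S)$ and every $\phi(S) \cup \{x\}$ is $c$-red by the definition of $c^*$, so the map $\psi\colon Q_n \to \QQ(\cX \cup \cY)$ given by $\psi(S) = \phi(S \cap [n-1])$ if $n \notin S$ and $\psi(S) = \phi(S \cap [n-1]) \cup \{x\}$ if $n \in S$ is an order-preserving embedding whose entire image is $c$-red, yielding the desired red $Q_n$. In case (ii), the $c^*$-blue property ensures that at each level $i$, at least one of $Z_i \cup \{y_1, \dots, y_i\}$ and $Z_i \cup \{y_1, \dots, y_i, x\}$ is $c$-blue. I would let $\epsilon_i \in \{0,1\}$ indicate the $c$-blue choice and set $X_i = Z_i \cup \epsilon_i \cdot \{x\}$; since $Z_0 \subseteq \cdots \subseteq Z_k \subseteq \cX'$ and $x \notin Z_i$, the resulting sequence $(X_i)$ forms a $c$-blue $\cY$-chain precisely when $\epsilon_0 \leq \epsilon_1 \leq \cdots \leq \epsilon_k$.

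The main obstacle will be ensuring this monotonicity in case (ii). If for some indices $i^* < j^*$ one has $c(Z_{i^*} \cup \{y_1, \dots, y_{i^*}\}) = $ red (forcing $\epsilon_{i^*} = 1$) together with $c(Z_{j^*} \cup \{y_1, \dots, y_{j^*}, x\}) = $ red (forcing $\epsilon_{j^*} = 0$), then no non-decreasing assignment of $\epsilon$'s succeeds and a direct extraction of a $c$-blue chain fails. Resolving this will call either for a more careful selection among the possible $c^*$-blue chains that maximises the flexibility of $\epsilon$, or for combining the forced-red vertices identified above with the inductive structure inside $\QQ|_\varnothing^{Z_{j^*} \cup \cY \cup \{x\}}$ to locate a red $Q_n$ elsewhere in $\QQ(\cX \cup \cY)$.
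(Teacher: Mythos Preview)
The paper does not supply its own proof of this lemma; it is quoted from \cite{GMT}, with an alternative proof pointed to in \cite{QnV}. So there is no in-paper argument to compare against.

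Your plan has a real gap, which you identify but do not close: in case~(ii) nothing forces the $\epsilon_i$ to be monotone, and neither proposed remedy works. The first is unavailable because the inductive hypothesis hands you \emph{some} $c^*$-blue $\cY$-chain with no control over which one; you cannot ``select more carefully'' unless you strengthen the statement being proved by induction. The second produces only the two red vertices $V_{i^*}\subset V_{j^*}\cup\{x\}$, i.e.\ a red $Q_1$, with no mechanism to amplify this to a red $Q_n$ for $n\ge 2$; the appeal to ``inductive structure inside $\QQ\big|_\varnothing^{Z_{j^*}\cup\cY\cup\{x\}}$'' does not specify what statement is being invoked or why its hypotheses hold. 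Already for $n=1$, $\cX=\{x\}$, $\cY=\{y\}$, with $\varnothing$ and $\{x,y\}$ red and $\{x\}$ and $\{y\}$ blue, one checks that case~(i) fails (no $c^*$-red vertex exists), the unique $c^*$-blue chain $\varnothing\subset\{y\}$ forces $\epsilon_0=1>0=\epsilon_1$, and there is no blue $\cY$-chain at all---so your procedure is obliged to output a red $Q_n$, which it manages here only because the two forced-red vertices happen to form the desired $Q_1$. For $n\ge 2$ two comparable red vertices are nowhere near enough.

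A standard direct argument avoids induction on $n$ altogether: for $X\subseteq\cX$ let $g(X)\in\{-1,0,\dots,k\}$ be the largest $i$ for which some blue partial $\cY$-chain $X_0\cup\varnothing\subset\cdots\subset X_i\cup\{y_1,\dots,y_i\}$ with $X_i\subseteq X$ exists. If some $g(X)=k$ we have the blue $\cY$-chain; otherwise $g$ is monotone and $\phi(X)=X\cup\{y_1,\dots,y_{g(X)+1}\}$ defines an $\cX$-good embedding of $\QQ(\cX)$ whose image is entirely red, since a blue $\phi(X)$ would extend a witnessing chain for $g(X)$ by one step and contradict the maximality of $g(X)$.
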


The following corollary was shown independently by Axenovich and Walzer \cite{AW}.

\begin{corollary}[\cite{AW}]\label{cor:chain}
Let $n$ and $k$ be positive integers. Let $\QQ$ be a colored Boolean lattice of dimension $n+k$. 
Then $\QQ$ contains a red copy of $Q_n$ or a blue chain of length $k+1$.
\end{corollary}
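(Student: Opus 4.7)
The plan is to derive Corollary~\ref{cor:chain} as an almost immediate consequence of Lemma~\ref{lem:chain}. Given a colored Boolean lattice $\QQ$ of dimension $n+k$, I would identify $\QQ$ with $\QQ(\cZ)$ for some ground set $\cZ$ with $|\cZ|=n+k$ and choose an arbitrary partition $\cZ=\cX\cup\cY$ with $|\cX|=n$ and $|\cY|=k$. I would then fix any linear ordering $\tau=(y_1,\dots,y_k)$ of $\cY$, so that the hypotheses of Lemma~\ref{lem:chain} are satisfied verbatim.

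Next, I would invoke Lemma~\ref{lem:chain} directly. It yields one of two outcomes: either a red copy of $Q_n$ in $\QQ$, which is already one of the two conclusions of the corollary, or a blue $\cY$-chain corresponding to $\tau$. In the second case, I would unpack the definition of a $\cY$-chain: its $k+1$ vertices are of the form $X_i\cup\{y_1,\dots,y_i\}$ for $i=0,\dots,k$, with $X_0\subseteq X_1\subseteq\dots\subseteq X_k\subseteq\cX$. Since the $y_i$ are distinct elements of $\cY$ while the $X_i$ lie in $\cX$, these vertices are strictly nested, so they form a chain of length $k+1$, all of whose vertices are blue.

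I do not foresee any substantive obstacle, as the corollary is really just a structural weakening of Lemma~\ref{lem:chain} in which one discards the additional information about how the chain aligns with the partition $\cX\cup\cY$. The only minor verification needed is that a $\cY$-chain is indeed a chain of exactly $k+1$ elements, which follows immediately from its definition recalled above.
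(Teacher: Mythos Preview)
Your proposal is correct and matches the paper's approach: the paper states this result as an immediate corollary of Lemma~\ref{lem:chain} without spelling out a proof, and your derivation is precisely the intended one-step specialization (partition the ground set, apply the lemma, and observe that a $\cY$-chain is a blue chain of length $k+1$).
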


\subsection{Embedding of a Boolean lattice}

Recall that a \textit{copy} of a poset $P$ in another poset $Q$ is defined as a subposet of $Q$ which is isomorphic to $P$.
Equivalently, a copy of $P$ in $Q$ is the image of an \textit{embedding} $\phi\colon P\to Q$, 
i.e., an injective function such that for any two $X,Y\in P$, $X\le_P Y$ if and only if $\phi(X)\le_Q \phi(Y)$.
Axenovich and Walzer \cite{AW} showed that every embedding of a small Boolean lattice into a larger Boolean lattice has the following nice property, see Theorem 8 of \cite{AW}.

\begin{lemma}[\cite{AW}]\label{lem:embed}\label{embed_lem} 
Let $n\in\N$ and let $\bZ$ be a set with $|\bZ|\ge n$. If there is an embedding $\phi\colon Q_n\to \QQ(\bZ)$,
then there exist a subset $\bX\subseteq\bZ$ with $|\bX|=n$, and an embedding $\phi'\colon \QQ(\bX)\to \QQ(\bZ)$ with the same image as $\phi$ such that $\phi'(X)\cap \bX=X$ for all $X\subseteq \bX$.
\end{lemma}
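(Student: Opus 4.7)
My plan is to locate $n$ distinguished elements $x_1,\dots,x_n\in\cZ$ that will act as the coordinates of the desired $\cX$-good copy of $Q_n$. The cleanest sufficient condition to aim for is
\[
x_i\in\phi(Y)\iff i\in Y \qquad \text{for every } Y\subseteq[n].
\]
If this holds with distinct $x_i$'s, then setting $\cX=\{x_1,\dots,x_n\}$ and $\phi'(Y)=\phi(\{i: x_i\in Y\})$ for $Y\subseteq\cX$ gives a poset embedding with the same image as $\phi$, and $\phi'(Y)\cap \cX=Y$ is immediate. Order preservation of $\phi'$ reduces to the fact that the bijection $Y\mapsto\{i:x_i\in Y\}$ between $\QQ(\cX)$ and $\QQ([n])$ is monotone and that $\phi$ is itself an embedding.

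Next I would unwind the above condition. Because $\phi$ is monotone, ``$x_i\in\phi(Y)$ for every $Y\ni i$'' is equivalent to $x_i\in\phi(\{i\})$, and ``$x_i\notin\phi(Y)$ for every $Y\not\ni i$'' is equivalent to $x_i\notin\phi([n]\setminus\{i\})$. So each $x_i$ must be drawn from
\[
E_i \;=\; \phi(\{i\})\setminus\phi([n]\setminus\{i\}),
\]
and the entire task reduces to showing that the family $E_1,\dots,E_n$ admits a system of distinct representatives.

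Two observations then finish the proof. First, $E_i\neq\varnothing$: since $\{i\}\not\subseteq[n]\setminus\{i\}$ and $\phi$ is an embedding, $\phi(\{i\})\not\subseteq\phi([n]\setminus\{i\})$. Second, and this is the crux, the $E_i$'s are pairwise disjoint: for distinct $i,j$, the inclusion $\{i\}\subseteq [n]\setminus\{j\}$ in $\QQ([n])$ forces $\phi(\{i\})\subseteq\phi([n]\setminus\{j\})$, so $E_i\subseteq\phi(\{i\})$ is disjoint from $E_j=\phi(\{j\})\setminus\phi([n]\setminus\{j\})$. Disjointness makes the SDR trivial: simply pick any $x_i\in E_i$, the $x_i$'s are automatically distinct, and $\cX=\{x_1,\dots,x_n\}$ together with the $\phi'$ defined above settles the claim. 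I do not anticipate a real obstacle: the only conceptual step is noticing that the rigidity of embedding a Boolean lattice (each atom sits under every coatom except its own) already yields disjointness of the $E_i$'s, so no Hall-type counting argument is needed.
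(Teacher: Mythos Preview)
The paper does not supply its own proof of this lemma; it is quoted from Axenovich--Walzer \cite{AW} (their Theorem~8) without argument. Your proof is correct and is essentially the standard one: the sets $E_i=\phi(\{i\})\setminus\phi([n]\setminus\{i\})$ are nonempty (since $\phi$ reflects non-inclusions) and pairwise disjoint (since $\phi(\{i\})\subseteq\phi([n]\setminus\{j\})$ for $i\neq j$), so choosing one point from each yields the ground set $\cX$, and precomposing $\phi$ with the induced relabelling $\QQ(\cX)\to\QQ([n])$ gives the $\cX$-good embedding with the same image.
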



\section{Forbidden non-diverse colored posets}\label{sec:EHnoncolorful}

\begin{proof}[Proof of Theorem \ref{thm:EHgenNON}]
For the lower bound, note that $P_b\subseteq Q_n$ by the choice of $n$. Thus, $R(P_b,Q_n)\le \widetilde{R}(\dot P^{(b)}_b,Q_n) \le \widetilde{R}(\dot P,Q_n)$.
A similar argument shows that $R(P_r,Q_n)\le  \widetilde{R}(\dot P,Q_n)$.

To establish the upper bound, let $m=\max\{R(P_b,Q_n), R(P_r,Q_n)\}$ and $N=m+2$.
Consider an arbitrary blue/red coloring of the Boolean lattice $\QQ=\QQ([N])$ which contains no monochromatic copy of $Q_n$.
We shall show that this coloring contains a copy of $\dot P$.
Note that the sublattices $\QQ\big|_{\{1\}}^{[N]\setminus\{2\}}$ and $\QQ\big|_{\{2\}}^{[N]\setminus\{1\}}$ are parallel.
The sublattice $\QQ\big|_{\{1\}}^{[N]\setminus\{2\}}$ is isomorphic to a Boolean lattice of dimension $N-2=m\ge R(P_b,Q_n)$, thus it contains a blue copy of $P_b$. 
Similarly, $\QQ\big|_{\{2\}}^{[N]\setminus\{1\}}$ contains a red copy of $P_r$.
By combining these two subposets, we obtain a copy of $\dot P$.
\end{proof}

Theorem \ref{thm:EHantichain} is a consequence of the following three lemmas. 

\begin{lemma}\label{lem:EHantichain1}
For every $1\le s\le t< n$,\:\:$\widetilde{R}({\dot C_t^{(b)}}\opl {\dot C_s^{(r)}},Q_n)=n+t+1$. 
\end{lemma}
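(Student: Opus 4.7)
The plan is to prove the claimed equality by establishing the two bounds separately, writing $\dot P = \dot C_t^{(b)} \sqcup \dot C_s^{(r)}$.

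For the upper bound $\tilde R(\dot P, Q_n) \le n+t+1$, I would copy the scheme from the proof of Theorem \ref{thm:EHgenNON}. Given any $2$-coloring of $\QQ = \QQ([N])$ with $N = n+t+1$ and no monochromatic $Q_n$, consider the two parallel sublattices $\QQ' = \QQ\big|_{\{1\}}^{[N]\setminus\{2\}}$ and $\QQ'' = \QQ\big|_{\{2\}}^{[N]\setminus\{1\}}$, each of dimension $n+t-1$. Applying Corollary \ref{cor:chain} with $k = t-1$, each of $\QQ'$ and $\QQ''$ contains either a monochromatic $Q_n$ or a chain of length $t$ in the other color; the hypothesis forces a blue $C_t$ in $\QQ'$ and a red $C_t$ (hence also a red $C_s$) in $\QQ''$. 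Because every vertex of $\QQ'$ contains $1$ but not $2$ while every vertex of $\QQ''$ contains $2$ but not $1$, the two chains are pairwise incomparable, giving a copy of $\dot P$.

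For the lower bound $\tilde R(\dot P, Q_n) \ge n+t+1$, I would construct an explicit coloring of $\QQ = \QQ([N])$ with $N = n+t$ that contains neither $\dot P$ nor a monochromatic $Q_n$. I take the following modification of the standard layered coloring: declare $Z \subseteq [N]$ to be blue if $|Z| \le t-1$ or $Z = [N]$, and red otherwise. The red vertices lie in ranks $t, \ldots, n+t-1$, spanning only $n-1 < n$ consecutive values, so no red $Q_n$ can embed. For a putative blue $Q_n$, Lemma \ref{lem:embed} lets me pass to an $\cX$-good form $\phi(X) = X \cup g(X)$ with $|\cX| = n$ and $g$ monotone into $\QQ([N]\setminus\cX)$; since $|\phi(\cX)| \ge n > t-1$ and $\phi(\cX)$ must be blue, one obtains $\phi(\cX) = [N]$, which forces $g(\cX) = [N]\setminus\cX$. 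But then for any coatom $X = \cX\setminus\{x\}$, the vertex $\phi(X)$ misses $x$, so $\phi(X) \ne [N]$, while $|\phi(X)| \ge n-1 > t-1$, making $\phi(X)$ red, a contradiction.

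To rule out $\dot P$ in this coloring, note that any chain uses each rank at most once, and the set of blue ranks $\{0, 1, \ldots, t-1\} \cup \{n+t\}$ has only $t+1$ elements. Therefore any blue chain of length $t$ must contain either $\varnothing$ (the unique vertex of rank $0$) or $[N]$ (the unique vertex of rank $n+t$). Both of these vertices are comparable to every element of $\QQ$, so no red chain can be parallel to such a blue chain, ruling out any copy of $\dot P$.

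The step I expect to be the main obstacle is choosing the coloring for the lower bound. A plain layered coloring (splitting all ranks at some threshold in $[t, n-1]$) avoids monochromatic $Q_n$ but does admit parallel bichromatic chains: one can pick the blue chain from one set of coordinates and the red chain from a disjoint set of coordinates. The key modification is to reassign the top vertex $[N]$ to the blue class; this extra ``blue ceiling'' forces every maximum-length blue chain to touch an extremal vertex of $\QQ$, which is comparable to everything and hence cannot take part in any parallel configuration.
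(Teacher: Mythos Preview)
Your proposal is correct and follows essentially the same approach as the paper: the upper bound is exactly the argument behind Theorem~\ref{thm:EHgenNON} combined with Corollary~\ref{cor:chain}, and your lower-bound coloring is the specific instance of the paper's construction (which colors $\varnothing$, $[n+t]$, and $t-1$ further layers blue) obtained by taking those extra layers to be $1,\dots,t-1$. The only cosmetic difference is that you rule out a blue $Q_n$ via Lemma~\ref{lem:embed}, whereas the paper simply observes that the blue vertices occupy only $t+1\le n$ layers, which already precludes a blue chain of length $n+1$.
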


\begin{proof}
The upper bound $\widetilde{R}({\dot C_t^{(b)}}\opl {\dot C_s^{(r)}},Q_n)\le R(C_t,Q_n)+2=n+t+1$ is implied by Theorem \ref{thm:EHgenNON} and Corollary \ref{cor:chain}.
We shall prove the lower bound by constructing a layered coloring of $\QQ([n+t])$ that contains neither a copy of $\dot C_t^{(b)}\opl \dot C_s^{(r)}$ nor a monochromatic copy of $Q_n$.
Assign the color blue to the two vertices $\varnothing$ and $[n+t]$ as well as to all vertices in $t-1$ arbitrarily chosen additional layers. Color all remaining vertices in red.
There are $t+1\le n$ blue layers and $n$ red layers in our coloring.
Since $Q_n$ contains a chain on $n+1$ vertices, there is no monochromatic copy of $Q_n$.
Next, assume towards a contradiction that there exists a copy $\dot \cP$ of $\dot C_t^{(b)}\opl \dot C_s^{(r)}$.
The subposet $\dot \cP$ contains $t$ pairwise comparable blue vertices. 
Since there are $t+1$ blue layers in our coloring, either $\varnothing$ or $[n+t]$ are contained in $\dot \cP$. 
Both of these vertices are comparable to every other vertex of the copy of $\dot \cP$.
However, every blue vertex of $\dot\cP$ is incomparable to every red vertex of $\dot\cP$, a contradiction.
\end{proof}

\begin{lemma}\label{lem:EHantichain2}
For $n\ge 3$,\:\:$\widetilde{R}({\dot A_2^{(b)}}\opl {\dot A_2^{(r)}},Q_n)=n+2$.
\end{lemma}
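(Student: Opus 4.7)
For the lower bound $\tilde{R}(\dot A_2^{(b)}\sqcup\dot A_2^{(r)},Q_n)\ge n+2$, I plan to exhibit an explicit coloring of $\QQ([n+1])$ with neither a monochromatic $Q_n$ nor a copy of $\dot A_2^{(b)}\sqcup\dot A_2^{(r)}$: declare $\varnothing$ and $[n+1]$ blue, and every other vertex red. The only two blue vertices are comparable, so there is no blue antichain of size two, hence no copy of $\dot A_2^{(b)}\sqcup\dot A_2^{(r)}$. For the absence of a red $Q_n$, I will invoke Lemma~\ref{embed_lem}: every copy of $Q_n$ in $\QQ([n+1])$ is the image of an $\cX$-good embedding $\phi'$ for some $\cX\subseteq[n+1]$ with $|\cX|=n$, and writing $\{z\}=[n+1]\setminus \cX$, the monotonicity of the assignment $X\mapsto X\cup S_X$ with $S_X\subseteq\{z\}$ forces the image to contain either $\phi'(\varnothing)=\varnothing$ or $\phi'(\cX)=[n+1]$, both of which are blue. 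Hence no red $Q_n$, and trivially no blue $Q_n$ since there are only two blue vertices.

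For the upper bound $\tilde{R}\le n+2$, Theorem~\ref{thm:EHgenNON} combined with $R(A_2,Q_n)=n+2$ (itself a short consequence of Lemma~\ref{chain_lem}) yields only the weaker bound $n+4$, so a direct argument is needed. Consider a coloring of $\QQ=\QQ([n+2])$ with no monochromatic $Q_n$. Setting $\cX=[n+2]\setminus\{1,2\}$ and $\cY=\{1,2\}$, I will apply Lemma~\ref{chain_lem} four times: to each of the two linear orderings of $\cY$ and to each ``forbidden'' color. The absence of a red $Q_n$ produces blue $\cY$-chains for the orderings $(1,2)$ and $(2,1)$, while the absence of a blue $Q_n$ produces the analogous red $\cY$-chains. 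Reading off their middle vertices yields blue vertices $b_1=B\cup\{1\}\in\QQ\big|_{\{1\}}^{[n+2]\setminus\{2\}}$ and $b_2=B'\cup\{2\}\in\QQ\big|_{\{2\}}^{[n+2]\setminus\{1\}}$, and red vertices $r_1=R\cup\{1\}$, $r_2=R'\cup\{2\}$ with $B,B',R,R'\subseteq\cX$. The pairs $\{b_1,b_2\}$, $\{r_1,r_2\}$ and the cross-pairs $\{b_1,r_2\}$, $\{b_2,r_1\}$ are automatically incomparable, since the two vertices in each pair disagree on the singleton elements $1$ versus $2$.

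The crux, and the step I expect to be the main obstacle, is making the two in-sublattice pairs $\{b_1,r_1\}$ and $\{b_2,r_2\}$ incomparable, which amounts to ensuring $B\parallel R$ and $B'\parallel R'$ inside $\QQ(\cX)$. I will argue this by case analysis on the four parallel $Q_n$ sublattices $\QQ\big|_{\{1\}}^{[n+2]\setminus\{2\}}$, $\QQ\big|_{\{2\}}^{[n+2]\setminus\{1\}}$, $\QQ\big|_\varnothing^{[n+2]\setminus\{1,2\}}$, and $\QQ\big|_{\{1,2\}}^{[n+2]}$, each of which must be non-monochromatic (otherwise it would itself be a monochromatic $Q_n$). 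If both $\QQ\big|_{\{1\}}^{[n+2]\setminus\{2\}}$ and $\QQ\big|_{\{2\}}^{[n+2]\setminus\{1\}}$ contain an incomparable blue--red pair, I take those directly as $(b_1,r_1)$ and $(b_2,r_2)$; parallelism of the two sublattices then forces all six pairwise incomparabilities and we are done. Otherwise, say $\QQ\big|_{\{1\}}^{[n+2]\setminus\{2\}}$ is \emph{chain-separated} (every blue is comparable to every red in it), which strongly restricts the coloring of that $Q_n$; combining this with the non-monochromaticity of the remaining two parallel sublattices and a further application of Lemma~\ref{chain_lem}, I will either assemble the required colored antichain using vertices from $\QQ\big|_\varnothing^{[n+2]\setminus\{1,2\}}$ and $\QQ\big|_{\{1,2\}}^{[n+2]}$ or reach a contradiction with the no-monochromatic-$Q_n$ hypothesis. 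The assumption $n\ge 3$ enters precisely in guaranteeing that each of these $Q_n$ sublattices has width at least three, which supplies enough room for the combinatorial extraction.
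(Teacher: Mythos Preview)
Your lower bound is correct and is essentially the construction underlying $R(A_2,Q_n)\ge n+2$, which is what the paper simply cites via Theorem~\ref{thm:EHgenNON}; so here you agree in substance.

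For the upper bound, your plan diverges from the paper and has a real gap at exactly the point you flag as ``the crux''. You correctly observe that the $\cY$-chain argument gives you $b_1,r_1\in\QQ\big|_{\{1\}}^{[n+2]\setminus\{2\}}$ and $b_2,r_2\in\QQ\big|_{\{2\}}^{[n+2]\setminus\{1\}}$ with all cross-pairs incomparable, but nothing forces $b_1\parallel r_1$ or $b_2\parallel r_2$. Your fallback --- go to the other two sublattices $\QQ\big|_\varnothing^{[n+2]\setminus\{1,2\}}$ and $\QQ\big|_{\{1,2\}}^{[n+2]}$ and apply Lemma~\ref{chain_lem} again --- does not obviously help: those sublattices are \emph{not} parallel to the first two (every vertex of $\QQ\big|_\varnothing^{[n+2]\setminus\{1,2\}}$ is below some vertex of $\QQ\big|_{\{1\}}^{[n+2]\setminus\{2\}}$, for instance), so vertices harvested there can be comparable to $b_1,r_1,b_2,r_2$. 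Nor does the ``chain-separated'' hypothesis on a non-monochromatic $Q_n$ by itself force a monochromatic $Q_n$: coloring just the top and bottom of $Q_n$ blue and everything else red is chain-separated but has no monochromatic $Q_{n}$ inside. So the promised case analysis would need substantial new ideas, not just another invocation of Lemma~\ref{chain_lem}.

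The paper's upper-bound argument is entirely different and avoids this difficulty: it works layer by layer in $\QQ([n+2])$. If some layer $i\in\{1,\dots,n+1\}$ has two blue and two red vertices, that single layer already contains the target antichain. Otherwise every such layer is ``almost monochromatic'' (at most one vertex of the minority colour), and one checks that adjacent layers cannot alternate, so without loss of generality all middle layers are almost red. From there a short analysis of a closest incomparable blue pair $X,Y$ (with cases $|X|\ge2$, $|Y|\le N-2$, or $|X|=1,|Y|=N-1$) either produces $\dot A_2^{(b)}\sqcup\dot A_2^{(r)}$ directly or exhibits an explicit red sublattice of dimension $n$. This layer-based approach is where the actual work lies; your sublattice approach would need a comparable amount of new argument to close the gap.
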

\begin{proof}
The lower bound $\widetilde{R}({\dot A_2^{(b)}}\opl {\dot A_2^{(r)}},Q_n)\ge R(A_2,Q_n)=n+2$ follows from Theorem~\ref{thm:EHgenNON} and the fact that $R(A_2,Q_n)= n+2$ by Theorem 5 in \cite{QnPA}.
For the upper bound, let $N=n+2$ and fix an arbitrary blue/red coloring of the Boolean lattice $\QQ=\QQ([N])$.
We shall show that there is either a colored copy of ${\dot A_2^{(b)}}\opl {\dot A_2^{(r)}}$ or a monochromatic copy of $Q_n$.

We say that a layer $\{Z\in\QQ: ~ |Z|=i\}$, $i\in\{1,\dots,n+1\}$, is \textit{almost red} if it contains at most one blue vertex, and \textit{almost blue} if it contains at most one red vertex.
We can suppose that every layer $i$, where $i\in\{1,\dots,n+1\}$, is almost red or almost blue; otherwise, such a layer contains a copy of ${\dot A_2^{(b)}}\opl {\dot A_2^{(r)}}$.
If there are consecutive layers $i$ and $i+1$, $i\in \{1,\dots, n\}$, such that one of them is almost red and one is almost blue,
then it is straightforward to find a copy of ${\dot A_2^{(b)}}\opl {\dot A_2^{(r)}}$, so suppose otherwise.
Without loss of generality, every layer is almost red.

First, assume that any two blue vertices in $\QQ$ are comparable, i.e., the blue vertices form a chain. 
Let $b\in[N]$ be a ground element contained in every blue vertex, except for possibly $\varnothing$.
Let $a\in[N]$ be a ground element contained in none of the blue vertices, except for possibly $[N]$. 
Note that the sublattice $\QQ\big|_{\{a\}}^{[N]\setminus\{b\}}$ contains no blue vertex. Since its dimension is $N-2=n$, the sublattice is a red copy of $Q_n$, as desired.

From now on, suppose there are two blue incomparable vertices. Pick two blue vertices $X,Y\in\QQ$ such that $X$ and $Y$ are incomparable, $|X|\le |Y|$, and 
$|Y|-|X|$ is minimal among such pairs, i.e., there are no two blue incomparable $X',Y'\in\QQ$ such that $|X'|\le |Y'|$, and $|Y'|-|X'|<|Y|-|X|$.

\indent Because layers $|X|$ and $|Y|$ are almost red, we see that $1\le |X|<|Y|\le N-1$.
We distinguish three cases, depending on whether $|X|=1$ and $|Y|=N-1$.
\\

\noindent \textbf{Case 1:} $|X|\ge 2$.\smallskip\\
Since $X\not\subseteq Y$, there exists a ground element $a\in X\setminus Y$.
Let 
$$\cF=\{Z\in\QQ:~ |Z|=|X|,\ a\in Z\},$$
so $X\in\cF$. Note that $\cF$ is a layer of the $(N-1)$-dimensional sublattice $\QQ\big|_{\{a\}}^{[N]}$,
therefore the size of $\cF$ is 
$$|\cF|=\binom{N-1}{|X|-1}\ge \binom{N-1}{1}=N-1$$
In particular, there exist two distinct vertices $U_1,U_2\in\cF\setminus\{X\}$.
We claim that $X$, $Y$, $U_1$, and $U_2$ form a copy of $\dot A_2^{(b)}\opl \dot A_2^{(r)}$.
Indeed, $X$ and $Y$ are blue and, since layer $|X|$ is almost red, $U_1$ and $U_2$ are red.
Recall that $\cF$ is a layer of a sublattice and thus an antichain, so $U_1$, $U_2$, and $X$ are pairwise incomparable.
Furthermore, $Y$ is incomparable to each of $U_1$, $U_2$, and $X$, because on the one hand $|U_1|=|U_2|=|X|<|Y|$, and on the other hand $a$ is contained in each of $U_1$, $U_2$, and $X$, but $a\notin Y$.
\\

\noindent \textbf{Case 2:} $|Y|\le N-2$.\smallskip\\
We proceed similarly to Case 1, so we only sketch the proof.
Let $a\in X\setminus Y$, and let $\cF=\{Z\in\QQ:~ |Z|=|Y|,\ a\notin Z\}$.
Observe that $|\cF|\ge N-1$, so we find vertices $U_1,U_2\in\cF$ such that $X$, $Y$, $U_1$, and $U_2$ form a copy of $\dot A_2^{(b)}\opl \dot A_2^{(r)}$.
\\

\noindent \textbf{Case 3:} $|X|=1$ and $|Y|=N-1$.\smallskip\\
Since $X$ and $Y$ are incomparable, there is a ground element $a\in[N]$ such that $X=\{a\}$ and $Y=[N]\setminus \{a\}$.  
Fix some distinct ground elements $b,c\in [N]\setminus \{a\}$.
Assume that there is a blue vertex $U$ in the sublattice $\QQ\big|_{\{b\}}^{[N]\setminus\{c\}}$.
We shall find a contradiction to the minimality of $X$ and $Y$.
Since layer $1$ of the Boolean lattice $\QQ$ is almost red and $X$ is blue, the vertex $\{b\}$ is red, so $|U|\ge 2$. Similarly, $[N]\setminus\{c\}$ is red, which implies that $|U|\le N-2$.

\begin{itemize}
\item If $a\in U$, then $U$ and $Y=[N]\setminus\{a\}$ are incomparable, and $|Y|-|U|<N-2=|Y|-|X|$, contradicting the minimality of $|Y|-|X|$.

\item However, if $a\notin U$, then $U$ and $X=\{a\}$ are incomparable, and $|U|-|X|<|Y|-|X|$, which also contradicts the minimality of $|Y|-|X|$.
\end{itemize}
Therefore, the sublattice $\QQ\big|_{\{b\}}^{[N]\setminus\{c\}}$ is a red copy of $Q_n$.
\end{proof}

\begin{lemma}\label{lem:EHantichain3}
Let $\dot A$ be a colored antichain such that there are three vertices of the same color. Then for sufficiently large $n$,\:\:$\widetilde{R}(\dot A,Q_n)=n+3$.
\end{lemma}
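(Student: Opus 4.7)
The plan is to prove the two inequalities separately. Without loss of generality, assume $\dot A$ contains at least three blue and at least one red vertex; write $\dot A=\dot A_s^{(b)}\sqcup\dot A_t^{(r)}$ with $s\ge 3$ and $t\ge 1$.

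For the lower bound $\tilde R(\dot A,Q_n)\ge n+3$, I would colour $\QQ([n+2])$ by painting blue the two maximal chains
\[
C_1=\{\{1,\ldots,i\}:0\le i\le n+2\},\quad C_2=\{\{n+3-j,\ldots,n+2\}:0\le j\le n+2\},
\]
and the remaining vertices red. The union $C_1\cup C_2$ is a union of two chains, hence has width at most $2$ and contains no $3$-element blue antichain, so no copy of $\dot A$ arises; and $|C_1\cup C_2|=2n+4<2^n$ for $n\ge 4$ precludes a blue $Q_n$. For the absence of a red $Q_n$, I would apply Lemma~\ref{lem:embed}: every embedding of $Q_n$ into $\QQ([n+2])$ may be taken $\cX$-good, say $\phi(X)=X\cup f(X)$, with $\cY=[n+2]\setminus\cX$ of size $2$ and $f\colon Q(\cX)\to 2^{\cY}$ monotone. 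A short case analysis on the four possible values $f(\varnothing)\in 2^{\cY}$, together with the monotonicity constraint $f(\cX)\supseteq f(\varnothing)$ and on whether $\cY$ meets the extreme elements $\{1,n+2\}$, shows that at least one of $\phi(\varnothing)$, $\phi(\cX)$, or an intermediate $\phi(X)$ must lie on $C_1\cup C_2$ and hence be blue.

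For the upper bound $\tilde R(\dot A,Q_n)\le n+3$, set $N=n+3$ and take an arbitrary coloring of $\QQ=\QQ([N])$ with no copy of $\dot A$; we aim to find a monochromatic $Q_n$. Mirroring the proof of Lemma~\ref{lem:EHantichain2}, each layer $L_i$ with $1\le i\le N-1$ has at most $s-1$ blues or at most $t-1$ reds, else $L_i$ itself contains $\dot A$. The coexistence of one ``almost blue'' and one ``almost red'' middle layer assembles a copy of $\dot A$ across the two (using that each layer is large and we can pairwise-incomparably pick representatives), so WLOG every middle layer has at most $s-1$ blues. If some middle layer $L_i$ is entirely blue, then either all layers on one side of $L_i$ are also blue (yielding a blue $Q_n$ sublattice), or a red vertex $Z$ with $|Z|\ge s$ exists; since $|L_i|$ is large for $n$ large, we may pick $s$ vertices of $L_i$ each incomparable to $Z$, giving $\dot A$. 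Otherwise the blue subposet has at most $(s-1)(N-1)+2$ vertices, with at most $s-1$ per middle layer. Adapting the case analysis of Lemma~\ref{lem:EHantichain2}: if no two blues are incomparable then blue is a chain, and a sublattice $\QQ\big|_A^{[N]\setminus C}$ with $|A|+|C|=3$ chosen so that $A$ lies in the top blue vertex and $C$ meets the bottom blue vertex is a red copy of $Q_n$; otherwise, choose incomparable blues $X,Y$ with $|Y|-|X|$ minimal and split into the three subcases $|X|\ge 2$, $|Y|\le N-2$, and $|X|=1,|Y|=N-1$, in each producing a dimension-$n$ red sublattice that uses the extra ground element afforded by $N=n+3$ (rather than $n+2$) to neutralise the possibly additional blues per layer.

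The main obstacle is the adaptation of Lemma~\ref{lem:EHantichain2}'s case analysis in the final step, where up to $s-1$ (rather than exactly one) blue per layer is permitted: each of the three incomparable-pair subcases gains an extra degree of freedom, and the third ``free'' ground element made available by $N=n+3$ must be carefully used in each subcase to ensure the red sublattice avoids every blue vertex. A crude counting argument is insufficient since even with at most $(s-1)(N-1)+2$ blues, each $Q_n$-sublattice of $\QQ$ meets the blue set in roughly $\Theta(s)$ vertices on average. In contrast, the lower-bound verification via Lemma~\ref{lem:embed} reduces to a mechanical finite check on the pair $(\cY\cap\{1,n+2\},\, f(\varnothing))$, which is routine.
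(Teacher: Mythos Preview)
Your lower-bound construction is correct and pleasantly explicit: the blue set $C_1\cup C_2$ has width $2$, so no copy of $\dot A$ (which needs a monochromatic $3$-antichain) appears, and the case analysis via Lemma~\ref{lem:embed} that you sketch does go through to exclude a red $Q_n$. The paper instead obtains the lower bound non-constructively, by citing $R(A_s,Q_n)=n+3$ for $s\ge 3$ from \cite{QnPA} and invoking Theorem~\ref{thm:EHgenNON}; so on this half your argument is a genuine, self-contained alternative.

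The upper bound, however, has a real gap that your own final paragraph names but does not close. Even granting the reduction to ``every middle layer has at most $s-1$ blues'' (which needs a little care for layers $1$ and $N-1$, but is fine once one restricts to \emph{consecutive} layers of opposite type, as Lemma~\ref{lem:EHantichain2} does), the proposed adaptation of the endgame of Lemma~\ref{lem:EHantichain2} does not go through. That argument relied essentially on each layer containing \emph{at most one} blue vertex: in its Case~1, for instance, one uses that every $Z$ with $|Z|=|X|$ and $Z\neq X$ is red. With up to $s-1$ blues per layer the blue set need not be a near-chain; it can have width $s-1$ and be spread across all $N-1$ middle layers, and there is no evident way to choose three ground elements so that the resulting dimension-$n$ sublattice dodges all of them. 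Your own remark that ``a crude counting argument is insufficient'' is exactly right, and no replacement is offered.

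The paper's upper-bound proof avoids this difficulty by a completely different mechanism. Rather than analysing layers, it first invokes $R(A_t,Q_n)=n+3$ from \cite{QnPA} (with $t=s+2^{2s}$) to obtain, in any coloring of $Q_{n+3}$ with no monochromatic $Q_n$, both a blue antichain $\cB'$ and a red antichain $\cA'$ of size $t$. From each it then greedily selects $s$ vertices $Z_i$ together with witnesses $a_i\in Z_i$ and $x_i\notin Z_i$ chosen so that $\{a_i\}$ and $\{x_j\}$ are disjoint; the surplus $t-s=2^{2s}$ guarantees enough room for this selection. The sublattice between $\{x_i:i\}$ and $[N]\setminus\{a_i:i\}$ is then element-wise incomparable to every selected $Z_i$, and Sperner's theorem inside it produces a further monochromatic $s$-antichain, which combines with the $Z_i$'s of the complementary color to form $\dot A_s^{(b)}\sqcup\dot A_s^{(r)}\supseteq\dot A$. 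The key idea you are missing is this bootstrap from a known large-antichain Ramsey result, instead of attempting to locate a red sublattice directly.
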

\begin{proof}
The bound $\widetilde{R}(\dot A,Q_n)\ge R(A_3,Q_n)=n+3$ is a consequence of Theorem \ref{thm:EHgenNON} and \cite{QnPA}.
In the remainder of the proof, we bound $\widetilde{R}(\dot A,Q_n)$ from above.
Let $s$ be the number of vertices of $\dot A$ colored in the majority color, so $s\ge 3$. Let $t=s+2^{2s}$.
Let $N=n+3$, and fix an arbitrary blue/red coloring of the Boolean lattice $\QQ=\QQ([N])$ which contains no monochromatic copy of $Q_n$.
We show that there is a copy of $\dot A_s^{(r)} \opl \dot A_s^{(b)}$ in this coloring, so in particular, there is a copy of $\dot A$.
It was shown in \cite{QnPA} that for sufficiently large $n$, $$R(A_t,Q_n)=n+3=N.$$
Since there is neither a blue nor a red copy of $Q_n$, there exists a  red copy $\cA'$ of $A_t$  as well as a   blue copy $\cB'$ of $A_t$ in our coloring.
Note that neither $\varnothing$ nor $[N]$ are contained in the antichains $\cA'$ or $\cB'$, since each of $\varnothing$ and $[N]$ is comparable to every vertex of $\QQ$.

Our proof idea is to find $s$ red vertices in $\cA'$ and $s$ blue vertices in $\cB'$, denoted by $Z_i$, $i\in[2s]$, 
which are ``easily separable'', i.e., such that there exist ground elements $a_i\in Z_i$ and $x_i\notin Z_i$ with $a_i\neq x_j$ for any indices $i,j\in[2s]$.
While we cannot guarantee that the vertices $Z_i$, $i\in[2s]$, form a colored copy of the desired antichain, we shall show that there is a large sublattice $\QQ'$ parallel to the vertices $Z_i$, $i\in[2s]$.
Any antichain of size $2s-1$ in $\QQ'$ contains $s$ monochromatic vertices. 
These monochromatic vertices, together with all $Z_i$'s of the complementary color, shall form a copy of $\dot A_s^{(r)} \opl \dot A_s^{(b)}$, as desired.
\\

Fix a vertex $Z_1\in\cA'$, and let $a_1\in Z_1$ and $x_1 \in [N]\setminus Z_1$ be chosen arbitrarily. We proceed iteratively.
For $i\in\{2,\dots,s\}$, assume that we selected distinct vertices $Z_1,\dots,Z_{i-1}\in\cA'$ and ground elements $a_1,\dots,a_{i-1}, x_1,\dots,x_{i-1}$ such that $a_j\in Z_j$, $x_j\in [N]\setminus Z_j$, and $a_{j}\neq x_{j'}$ for any $j, j'\in[i-1]$.
In the next iterative step, pick a vertex $Z_i\in\cA'$ such that 
\begin{itemize}
\item $Z_i$ is distinct from $Z_1,\dots, Z_{i-1}$,
\item there is an $a_i\in Z_i$ with $a_i\notin \{x_1,\dots,x_{i-1}\}$, and
\item there is an $x_i\in [N]\setminus Z_i$ with $x_i\notin \{a_1,\dots,a_{i-1}\}$.
\end{itemize}

To show that $Z_i$ is well-defined, let $\cF_i$ be the set of vertices that fail at least one of these criteria. 
We need to verify that $|\cF_i|<|\cA'|$.
The vertices in $\cF_i$ are $Z_1,\dots,Z_{i-1}$ as well as all subsets of $\{x_1,\dots,x_{i-1}\}$ 
and all vertices of the form $[N]\setminus X$, where $X\subseteq\{a_1,\dots,a_{i-1}\}$. Thus, the size of $\cF_i$ is 
$$|\cF_i|\le (i-1)+2^{i-1}+2^{i-1}\le (s-1)+2^{s}<t=|\cA'|,$$ 
so a triple $(Z_i, a_i, x_i)$ with the desired properties exists in every step $i$.
After iteration step $i=s$, let $\cA=\{Z_1,\dots,Z_{s}\}$. This subposet of $\cA'$ is a red antichain.

We proceed similarly for $\cB'$, i.e., for $i\in[s]$, we select $Z_{s+i}$, $a_{s+i}$, and $x_{s+i}$.
Pick a vertex $Z_{s+1}\in\cB'$ such that there are $a_{s+1}\in Z_{s+1}$ with $a_{s+1}\notin\{x_1,\dots,x_{s}\}$ and $x_{s+1}\in [N]\setminus Z_{s+1}$ with $x_{s+1}\notin \{a_1,\dots,a_{s}\}$.
This is possible because the number of ``bad'' vertices is $2^{s}+2^{s}<|\cB'|$. Iteratively, let $i\in\{2,\dots,s\}$.
Assume that we defined distinct vertices $Z_{s+1},\dots,Z_{s+i-1}\in\cB'$ and $a_{s+1},\dots,a_{s+i-1}, x_{s+1},\dots,x_{s+i-1}$ such that $a_{j}\in Z_j$, $x_j\in [N]\setminus Z_j$ for $j\in\{s+1,\dots,s+i-1\}$, and $a_{j_1}\neq x_{j_2}$ for any $j_1,j_2\in[s+i-1]$.
We choose $Z_{s+i}\in\cB'$ such that 
\begin{itemize}
\item $Z_{s+i}$ is distinct from $Z_{s+1},\dots, Z_{s+i-1}$,
\item there is an $a_{s+i}\in Z_{s+i}$ such that $a_{s+i}\notin \{x_1,\dots,x_{s+i-1}\}$, and
\item there is an $x_{s+i}\in [N]\setminus Z_{s+i}$ with $x_{s+i}\notin  \{a_1,\dots,a_{s+i-1}\}$.
\end{itemize}

\indent The number of vertices for which one of these properties fails is at most $$(i-1)+2^{s+i-1}+2^{s+i-1}\le(s-1)+2^{2s-1}+2^{2s-1}<t=|\cB'|,$$ 
so $Z_{s+i}$, $a_{s+i}$, and $x_{s+i}$ can be chosen in every step.
Let $\cB=\{Z_{s+1}\dots,Z_{2s}\}$, and note that this is a blue antichain. 
We remark that $\cA$ and $\cB$ are disjoint, because $\cA$ is red and $\cB$ is blue. However, $\cA\cup\cB$ might contain comparable vertices.

Consider the sublattice
$\QQ'=\big\{X\in\QQ: ~ \{x_i :~ i\in [2s]\}\subseteq X \subseteq [N]\setminus \{a_i :~ i\in [2s]\}\big\}$.
This subposet is well-defined, because $a_i\neq x_j$ for any $i,j\in[2s]$.
We claim that $\QQ'$ is parallel to $\cA\cup \cB$.
Let $X\in\QQ'$ and $i\in[2s]$.
Since $x_i\in X\setminus Z_i$ and $a_i\in Z_i\setminus X$, we see that $X$ and $Z_i$ are incomparable, so $\QQ'$ is parallel to $\cA$ and $\cB$.
The dimension of $\QQ'$ is at least $n-4s$. 
For sufficiently large $n$, there exists an antichain $\cP'$ on $2s-1$ vertices in~$\QQ'$.
In particular, $\cP'$ contains a monochromatic antichain $\cP$ on $s$ vertices.
If $\cP$ is blue, then $\cA\cup\cP$ is a copy of $\dot A_s^{(r)} \opl \dot A_s^{(b)}$.
If $\cP$ is red, then $\cP\cup\cB$ is a copy of $\dot A_s^{(r)} \opl \dot A_s^{(b)}$.
\end{proof}

\begin{proof}[Proof of Theorem \ref{thm:EHantichain}]
Lemma \ref{lem:EHantichain1} implies that $\widetilde{R}({\dot A_1^{(b)}}\opl {\dot A_1^{(r)}},Q_n)=n+2$.
By Lemma \ref{lem:EHantichain2}, $\widetilde{R}({\dot A_2^{(b)}}\opl {\dot A_2^{(r)}},Q_n)=n+2$,
thus also
$$n+2= \widetilde{R}({\dot A_1^{(b)}}\opl {\dot A_1^{(r)}},Q_n) \le \widetilde{R}({\dot A_2^{(b)}}\opl {\dot A_1^{(r)}},Q_n) \le \widetilde{R}({\dot A_2^{(b)}}\opl {\dot A_2^{(r)}},Q_n)=n+2,$$
and similarly $\widetilde{R}({\dot A_1^{(b)}}\opl {\dot A_2^{(r)}},Q_n)=n+2$.
For any other non-monochromatically colored antichain, the poset Erd\H{o}s-Hajnal number is determined by Lemma \ref{lem:EHantichain3}.
\end{proof}


\section{Forbidden chains}\label{sec:EHchain}

\subsection{Proof of Theorem \ref{thm:EHreduction}}
Throughout this subsection, let $\dot  C$ be a fixed colored chain on $t$ vertices $Z_1< Z_2 < \dots < Z_t$. 
For $i\in[t]$, we denote by~$\dot C\big|^{Z_{i}}_{Z_{1}}$ the subposet of $C$ consisting of its $i$ smallest vertices $Z_1<\dots<Z_i$, colored as in~$\dot C$. 
Additionally, let $\dot C\big|^{Z_{0}}_{Z_{1}}$ be the empty colored poset.
In this subsection, $\QQ$ is a Boolean lattice with a fixed $\dot C$-free blue/red coloring.
We partition the vertices of $\QQ$ into so-called \textit{phases}.
The \textit{$i$-th phase} of $\QQ$ with respect to $\dot C$ is defined as the family of vertices
$$\cF^{\dot C}_{i}=\left\{X\in\QQ : ~ \QQ\big|^X_\varnothing\text{ contains a copy of }\dot C\big|^{Z_{i-1}}_{Z_{1}}
\text{, but no copy of }\dot C\big|^{Z_{i}}_{Z_{1}}\right\}.$$
Here, $\QQ\big|^X_\varnothing$ inherits the coloring from $\QQ$. See Figure \ref{fig:QnEH:phase} for an example of phases of $Q_4$.
We remark that $\cF^{\dot C}_{i}$ might be empty. 

\begin{figure}[h]
\centering
\includegraphics[scale=0.62]{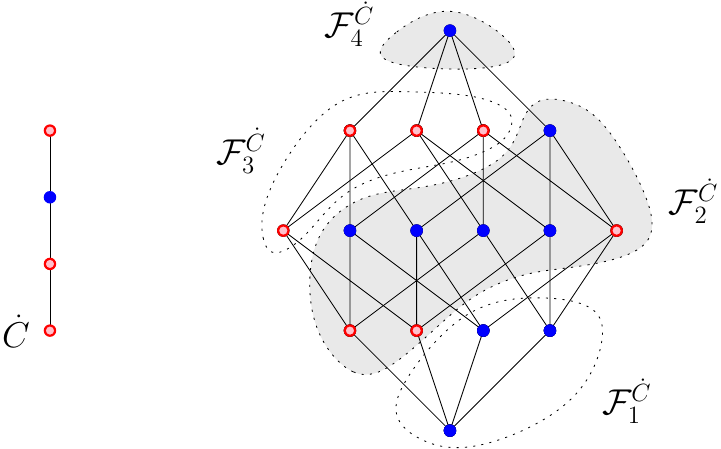}
\caption{A colored chain $\dot C$ and a $\dot C$-free blue/red coloring of $Q_4$ with sets $\cF^{\dot C}_{i}$, $i\in[4]$.}
\label{fig:QnEH:phase}
\end{figure}

Denote the color of $Z_i$, the $i$-th vertex of $\dot C$, by $c_i\in\{\text{blue}, \text{red}\}$, and let $\bar{c}_i$ be its complementary color.
Let $I(\dot C)$ be the set of indices for which there is no \textit{color switch} in $\dot C$, i.e.,
$$I(\dot C)=\big\{i\in\{2,\dots,t\} : ~ c_i= c_{i-1}\big\}.$$
In our example, $I(\dot C)=\{2\}$.
For $i\in [t]$, we define $\cA_i$ as the set of minimal vertices of~$\cF^{\dot C}_{i}$.
For example, in Figure \ref{fig:QnEH:phase}, the set $\cA_2$ consists of the three red vertices in $\cF^{\dot C}_{2}$.

The following properties are immediate, so we omit the proof.
\begin{lemma}\label{lem:QnEH:phase_basic} \ 
\begin{itemize}
\item[(i)] The families $\cF^{\dot C}_{1},\dots,\cF^{\dot C}_{t}$ partition $\QQ$.
\item[(ii)] Let $X,Y\in\QQ$ with $X\in \cF^{\dot C}_{i}$ and $Y\in \cF^{\dot C}_{j}$ for some $i,j\in[t]$. If $X \subseteq Y$, then $i\le j$. 
\end{itemize}
\end{lemma}

The next lemma shows that the color of each vertex in $\QQ$ is determined by its phase.
\begin{lemma}\label{lem:phase_color} \ 
\begin{enumerate}
\item[(i)] Every vertex in $\cF^{\dot C}_{1}$ has color $\bar{c}_1$.
\item[(ii)] Let $2\le i \le t$ with $c_i\neq c_{i-1}$. Then every vertex in $\cF^{\dot C}_{i}$ has color $\bar{c}_i$.
\item[(iii)] Let $2\le i \le t$ with $c_i= c_{i-1}$. Then every vertex of $\cA_{i}$  has color $c_i$, and every vertex in $\cF^{\dot C}_{i}\setminus \cA_i$ has the complementary color $\bar{c}_i$.
\end{enumerate}
\end{lemma}

\begin{proof}
Part (i) is immediate from the definition of $\cF^{\dot C}_{1}$.

For part (ii), consider an index $i\ge 2$ with $c_i\neq c_{i-1}$. Let $X$ be an arbitrary vertex in~$\cF^{\dot C}_{i}$. 
By definition of $\cF^{\dot C}_{i}$, there is a copy $\dot\cD$ of $\dot C\big|^{Z_{i-1}}_{Z_{1}}$ in $\QQ\big|^X_\varnothing$.
If $X$ has color $c_i=\bar{c}_{i-1}$, then $X$ has a different color than the maximal vertex of $\dot\cD$ and is larger than any vertex of $\dot\cD$, thus $X\notin \dot\cD$.
In particular, by adding the vertex $X$ to the colored chain $\dot\cD$, we obtain a copy of $\dot C\big|^{Z_{i}}_{Z_{1}}$ in $\QQ\big|^X_\varnothing$.
This is a contradiction to the assumption $X\in \cF^{\dot C}_{i}$.
Thus, the color of $X$ is $\bar{c}_i$.

For part (iii), let $i\ge 2$ with $c_i= c_{i-1}$, i.e., $i\in I(\dot C)$, and fix a vertex $X\in \cF^{\dot C}_{i}$.
\begin{itemize}
\item If $X\in \cA_i$, then $X$ is minimal with the property that $\QQ\big|^X_\varnothing$ contains a copy of $\dot C\big|^{Z_{i-1}}_{Z_{1}}$. 
In particular, $X$ is contained in a copy $\dot\cD$ of $\dot C\big|^{Z_{i-1}}_{Z_{1}}$ in $\QQ\big|^X_\varnothing$. 
The vertex $X$ is the maximal vertex of $\QQ\big|^X_\varnothing$, thus $X$ is also the maximal vertex of $\dot\cD$.
In particular, $X$ has color $c_{i-1}=c_i$.

\item If $X\notin\cA_i$, then there is a vertex $A\in \cF^{\dot C}_{i}$ such that $A\subset X$. 
Let $\dot \cD$ be a copy of $\dot C\big|^{Z_{i-1}}_{Z_{1}}$ in $\QQ\big|^A_\varnothing$.  
If $X$ has color $c_i$, then $\dot \cD$ and $X$ form a copy of $\dot C\big|^{Z_{i}}_{Z_{1}}$ in $\QQ\big|^X_\varnothing$, contradicting that $X$ is a vertex of $\cF^{\dot C}_{i}$. 
Therefore, $X$ has color $\bar{c}_i$.
\end{itemize}
\vspace*{-2em}
\end{proof}



\begin{proof}[Proof of Theorem \ref{thm:EHreduction}] 
Let $\dot  C$ be a colored chain on vertices $Z_1<\dots<Z_t$.
Recall that $\lambda=\lambda(\dot  C)$ is the maximal integer $\ell$ such that $\dot  C$ contains a copy of $\dot C_\ell^{(rbr)}$ or $\dot C_\ell^{(brb)}$. 
By switching the colors, we can suppose without loss of generality that the minimal vertex $Z_1$ of $\dot  C$ is red.
Observe that there is a largest alternating chain in $\dot C$ which contains $Z_1$.
In particular, there exists a largest alternating chain in $\dot C$ that is red-alternating, i.e., $\dot  C$ contains a copy of $\dot C_\lambda^{(rbr)}$.

For the lower bound on $\widetilde{R}(\dot  C,Q_n)$, note that any $\dot C^{(rbr)}_{\lambda}$-free colored Boolean lattice is also $\dot C$-free, so 
$\widetilde{R}(\dot  C,Q_n)\ge \widetilde{R}(\dot C^{(rbr)}_{\lambda},Q_n)$.

To show the upper bound on $\widetilde{R}(\dot  C,Q_n)$, we present a non-constructive lower bound on $\widetilde{R}(\dot C^{(rbr)}_{\lambda},Q_n)$, 
in terms of $\widetilde{R}(\dot  C,Q_n)$.
Let $N=\widetilde{R}(\dot C,Q_n)-1$ and $\QQ=\QQ([N])$. Select an arbitrary blue/red coloring of $\QQ$ which is $\dot C$-free and contains no monochromatic copy of $Q_n$. This coloring exists because $N<\widetilde{R}(\dot C,Q_n)$.
In $\QQ$, we shall find a copy $\QQ'$ of a Boolean lattice of dimension $N-t+\lambda$ which is colored $\dot C^{(rbr)}_{\lambda}$-free.
This proves that $\widetilde{R}(\dot C^{(rbr)}_{\lambda},Q_n)>N-t+\lambda$, implying the desired bound 
$\widetilde{R}(\dot  C,Q_n)=N+1\le \widetilde{R}(\dot C^{(rbr)}_{\lambda},Q_n)+t-\lambda$.

Next, we construct $\QQ'\subseteq \QQ$.
For $i\in[t]$, we denote by $\cF_i=\cF^{\dot C}_i$ the $i$-th phase of $\QQ$ with respect to $\dot C$.
Let $I=I(\dot C)$, i.e., the set of indices for which there is no color switch in $\dot C$. Observe that $|I|=t-\lambda$.
Recall that $\cA_i$ denotes the set of minimal vertices in $\cF_{i}$. Note that each $\cA_i$ is an antichain.
Given any $m$ antichains in $\QQ([N])$ for some $m\in\N$, consider the auxiliary coloring in which the antichains are blue and all other vertices are red. 
Then Corollary \ref{cor:chain} implies that $\QQ([N])$ contains a copy of an $(N-m)$-dimensional Boolean lattice not containing a single vertex of any of the antichains.
Thus, there is a copy~$\QQ'$ of a Boolean lattice of dimension $N-|I|=N-t+\lambda$ such that $\QQ'$ is disjoint from every $\cA_i$, $i\in I$.

For every $i\in[t]$, let $\cF'_i=\cF_i \cap \QQ'$, see Figure \ref{fig:QnEH_phaseF}. 
By Lemma \ref{lem:phase_color}, each $\cF'_i$, $i\in[t]$, is monochromatically colored with color $\bar{c}_i$.
Furthermore, by Lemma \ref{lem:QnEH:phase_basic} (i), we see that $\cF'_1,\dots,\cF'_t$ partition $\QQ'$.

\begin{figure}[h]
\centering
\includegraphics[scale=0.62]{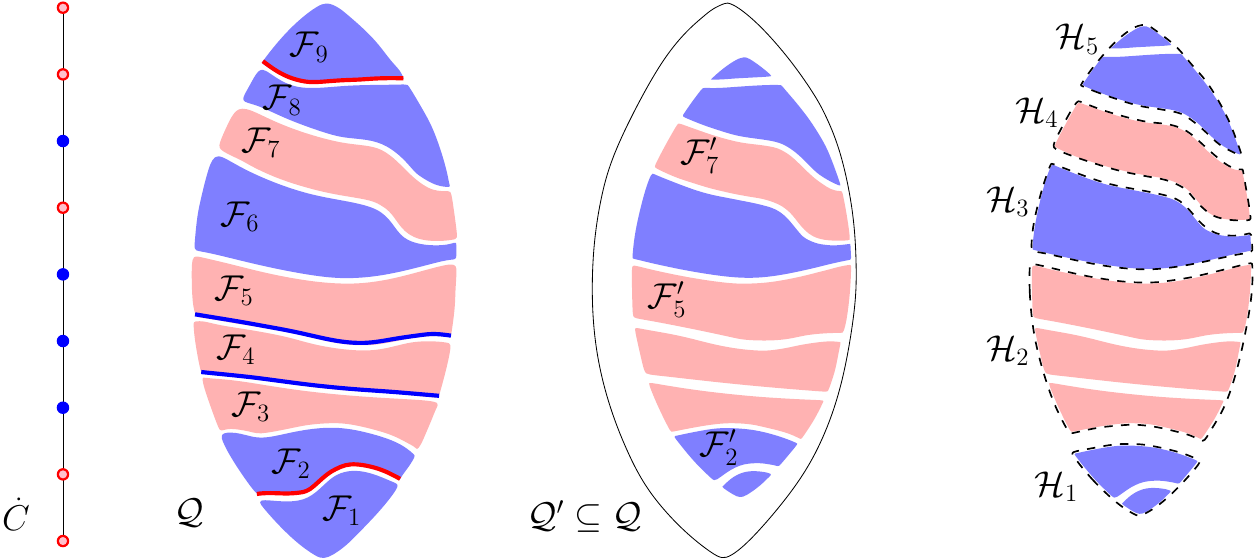}
\caption{A colored chain $\dot C$, families $\cF_i$ in $\QQ$, $\cF'_i$ in $\QQ'$, and $\cH_j$ partitioning $\QQ'$, where $t=9$, $s=5$, and $\lambda=5$.}
\label{fig:QnEH_phaseF}
\end{figure}

Next, we define vertex families $\cH_1,\dots,\cH_s$ partitioning $\QQ'$, by merging families $\cF'_i$, $i\in[t]$.
That is, let each $\cH_j$ be the union of consecutive phases $\cF'_i$'s of the same color, such that for $j\ge 2$, $\cH_{j}$ and $\cH_{j-1}$ have different colors, 
and such that consecutive $H_j$'s contain consecutive phases. An illustration of this merging is given in Figure \ref{fig:QnEH_phaseF}. 
Observe that the number of color switches of $\cH_j$'s, i.e., indices $j\ge 2$ for which $\cH_{j}$ and $\cH_{j-1}$ have distinct colors, 
is equal to the number of color switches of $\cF'_i$'s.
Recalling that each $\cF'_i$ has color $\bar{c}_i$, this quantity is equal to the number of color switches in $\dot C$, which is $\lambda-1$.
Therefore, $s\le \lambda$.

Since the families $\cH_j$, $j\in[s]$, consist of consecutive phases and by Lemma~\ref{lem:QnEH:phase_basic}~(ii), we have that for any $X\in\cH_{j_1}$ and $Y\in\cH_{j_2}$,
\begin{equation}
\text{ if }\quad X\subseteq Y, \quad \text{ then }\quad j_1\le j_2.
\label{eq:QnEH:chunk}
\end{equation}

To show that $\QQ'$ is $\dot C^{(rbr)}_{\lambda}$-free, we assume that there is a red-alternating chain $\cU$ of length $\lambda$ in $\QQ'$, 
say on vertices $U_1\subset \dots \subset U_\lambda$.
\begin{itemize}
\item If there is an $\cH_j$ which contains two vertices of $\cU$, say $U_{\ell}$ and $U_{\ell'}$ for some $\ell,\ell'\in[\lambda]$ with $\ell<\ell'$,
then (\ref{eq:QnEH:chunk}) implies that $U_{\ell +1}\in\cH_j$. Note that $U_{\ell}$ and $U_{\ell+1}$ have distinct colors. We arrive at a contradiction, because $\cH_j$ is monochromatic.

\item If every $\cH_j$, $j\in[s]$, contains at most one vertex of $\cU$, then every $\cH_j$ contains exactly one vertex of $\cU$, since $\cU$ has length $\lambda\ge s$.
In particular, $\cH_1\cap \cU$ is not empty. By (\ref{eq:QnEH:chunk}), $U_1\in \cH_1$.
The chain $\cU$ is red-alternating, so $U_1$ is red. However, $\cH_1$ has the color of $\cF'_1$, i.e., $\bar{c}_1$.
Recalling that $Z_1$, the minimal vertex of $\dot C$, is red, we conclude that $\cH_1$ is blue. This is a contradiction.
\end{itemize}
\vspace*{-2em}
\end{proof}


\subsection{Proof of Theorem \ref{thm:EHchain}}

We break down the proof of Theorem \ref{thm:EHchain} into three parts:
Theorem \ref{thm:EHchain} is immediate from Lemmas \ref{lem:EHchain1}, \ref{lem:EHchain2}, and \ref{lem:EHchain3}.

\begin{lemma}\label{lem:EHchain1}
For every $n\in\N$,\:\:$\widetilde{R}(\dot C^{(rbr)}_{2},Q_n)=\widetilde{R}(\dot C^{(rbr)}_{3},Q_n)=2n$. 
\end{lemma}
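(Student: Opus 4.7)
The plan is to pin $\tilde{R}(\dot C_2^{(rbr)}, Q_n)$ and $\tilde{R}(\dot C_3^{(rbr)}, Q_n)$ to exactly $2n$ via a common lower bound construction and two upper bound arguments that reduce to a single key step.

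For the lower bound I would produce a single coloring of $\QQ([2n-1])$ that is $\dot C_2^{(rbr)}$-free, and hence automatically $\dot C_3^{(rbr)}$-free since any copy of $\dot C_3^{(rbr)}$ contains a copy of $\dot C_2^{(rbr)}$ as its bottom two vertices. The natural candidate is the layered coloring: color every $X \subseteq [2n-1]$ with $|X| \le n-1$ blue, and every other $X$ red. Blue is then downward closed, so no red vertex lies below any blue vertex and no copy of $\dot C_2^{(rbr)}$ appears; moreover the longest chain in either color has $n$ vertices, while $Q_n$ has height $n+1$, so no monochromatic copy of $Q_n$ can exist. This gives $\tilde{R}(\dot C_2^{(rbr)}, Q_n) \ge 2n$ and $\tilde{R}(\dot C_3^{(rbr)}, Q_n) \ge 2n$.

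For the upper bound I would focus on $\dot C_3^{(rbr)}$, since the same subposet containment gives $\tilde{R}(\dot C_2^{(rbr)}, Q_n) \le \tilde{R}(\dot C_3^{(rbr)}, Q_n)$ for free. Suppose towards a contradiction that some coloring of $\QQ = \QQ([2n])$ is $\dot C_3^{(rbr)}$-free and contains no monochromatic copy of $Q_n$. Applying the color-symmetric form of Corollary \ref{cor:chain} (with $k = n$), the absence of a blue $Q_n$ produces a red chain $R_0 \subsetneq R_1 \subsetneq \cdots \subsetneq R_n$ of length $n+1$. Since the $R_i$'s have strictly increasing sizes, $|R_n| - |R_0| \ge n$, so the sublattice $\QQ\big|^{R_n}_{R_0}$ is a Boolean lattice of dimension at least $n$. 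The key step is then purely structural: for every $X$ with $R_0 \subsetneq X \subsetneq R_n$, being blue would make $R_0, X, R_n$ a red--blue--red chain, i.e.\ a copy of $\dot C_3^{(rbr)}$, contradicting our assumption; hence every such $X$ is red. Therefore $\QQ\big|^{R_n}_{R_0}$ is monochromatic red and contains a red copy of $Q_n$, again a contradiction. This yields $\tilde{R}(\dot C_3^{(rbr)}, Q_n) \le 2n$ and consequently $\tilde{R}(\dot C_2^{(rbr)}, Q_n) \le 2n$.

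I do not anticipate any serious obstacle: the core of the proof is the convexity of the red region forced by $\dot C_3^{(rbr)}$-freeness, which upgrades a long red chain (handed to us by Corollary \ref{cor:chain}) into a red sublattice of matching dimension. The only point requiring mild care is using the color-symmetric form of Corollary \ref{cor:chain}, which is immediate by swapping the roles of the two colors in its statement.
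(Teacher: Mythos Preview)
Your proposal is correct and follows essentially the same route as the paper: the layered coloring of $\QQ([2n-1])$ for the lower bound (the paper invokes this via Theorem \ref{thm:EHgenFUL}), and for the upper bound the same reduction to $\dot C_3^{(rbr)}$, the same use of Corollary \ref{cor:chain} to extract a red chain of length $n+1$, and the same observation that the sublattice between its endpoints must be entirely red. The only cosmetic difference is that the paper phrases the upper bound as a direct case split on the length of the longest red chain rather than as a proof by contradiction.
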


\begin{proof}
The lower bound is a consequence of Theorem \ref{thm:EHgenFUL}. 
Since $\widetilde{R}(\dot C^{(rbr)}_{2},Q_n)\le \widetilde{R}(\dot C^{(rbr)}_{3},Q_n)$, 
it remains to show that $\widetilde{R}(\dot C^{(rbr)}_{3},Q_n)\le2n$.
Let $\QQ=\QQ([2n])$, and pick an arbitrary blue/red coloring of $\QQ$. We shall find a copy of $\dot C^{(rbr)}_{3}$ or a monochromatic copy of $Q_n$ in this coloring. 
If the longest red chain in $\QQ$ has length at most $n$, Corollary~\ref{cor:chain} guarantees the existence of a blue copy of a Boolean lattice with dimension at least $n$.
So, suppose that there exists a red chain of length $n+1$. 
We denote its minimal element by $A$ and its maximal element by $B$, i.e., $A\subseteq B$ and $|B|-|A|\ge n$.
If there is a blue vertex $Z$ in the sublattice $\QQ\big|_A^B$, then the vertices $A$, $Z$, and $B$ form a copy of $\dot C^{(rbr)}_{3}$.
Otherwise, $\QQ\big|_A^B$ is a red copy of a Boolean lattice of dimension $|B|-|A| \ge n$.
\end{proof}

\begin{lemma}\label{lem:EHchain2}
Let $n\in\N$ and $t\ge 3$. Then $\widetilde{R}(\dot C^{(rbr)}_{t},Q_n)\le (t-1)n$. 
\end{lemma}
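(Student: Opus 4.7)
The plan is to prove the bound by induction on $t$, with base case $t=3$ supplied by Lemma \ref{lem:EHchain1}. For the inductive step I would assume $\tilde{R}(\dot C^{(rbr)}_{t-1},Q_n)\le(t-2)n$ and fix an arbitrary $\dot C^{(rbr)}_t$-free coloring of $\QQ=\QQ([N])$ with $N=(t-1)n$; the goal is to produce a monochromatic copy of $Q_n$.

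The central tool is the following reduction: for any red vertex $R\in\QQ$, the sublattice $\QQ\big|_R^{[N]}$ (inheriting the coloring) is $\dot C^{(brb)}_{t-1}$-free. Indeed, any copy of $\dot C^{(brb)}_{t-1}$ in that sublattice has a blue minimum $Z_1$, so $R\subsetneq Z_1$; then $R\subsetneq Z_1\subsetneq\dots\subsetneq Z_{t-1}$ is a chain of length $t$ in $\QQ$ with colors $r,b,r,b,\dots$, i.e.\ a forbidden copy of $\dot C^{(rbr)}_t$.

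Armed with this reduction, I would split into two cases. If every vertex of $\QQ$ of size at most $n$ is blue, then the sublattice $\QQ\big|_\varnothing^{[n]}$ --- a copy of $Q_n$ consisting entirely of vertices of size at most $n$ --- is monochromatically blue, and we are done. Otherwise, pick a red vertex $R$ with $|R|\le n$. Then $\QQ\big|_R^{[N]}$ has dimension at least $N-n=(t-2)n$ and, by the reduction, is $\dot C^{(brb)}_{t-1}$-free. Since swapping the two colors is a symmetry of the problem, $\tilde R(\dot C^{(brb)}_{t-1},Q_n)=\tilde R(\dot C^{(rbr)}_{t-1},Q_n)\le (t-2)n$ by the inductive hypothesis, so the sublattice must contain a monochromatic copy of $Q_n$, completing the induction.

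The main (and essentially only) nontrivial step will be the reduction. The point is that a $\dot C^{(brb)}_{t-1}$ is forced to start with a \emph{blue} vertex, which cannot coincide with the red anchor $R$, and therefore must strictly contain it; this strict containment is exactly what allows us to prepend $R$ and upgrade the alternating chain to a full $\dot C^{(rbr)}_t$. Once the reduction is in hand, the case analysis is mechanical.
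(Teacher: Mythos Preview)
Your proof is correct and closely related to the paper's, though the mechanics differ. Both argue by induction on $t$ with base case $t=3$ from Lemma~\ref{lem:EHchain1}, and both split $\QQ([N])$ into a ``large'' sublattice (to which the inductive hypothesis is applied) and a ``small'' sublattice of dimension $n$. The paper fixes a vertex $Z$ with $|Z|=N-n$ independently of the coloring, applies induction to the \emph{lower} sublattice $\QQ\big|_\varnothing^{Z}$ if it is $\dot C^{(rbr)}_{t-1}$-free, and otherwise extends the found chain \emph{upward} by any vertex of the right color in $\QQ\big|_Z^{[N]}$ (the alternative being that this top sublattice is monochromatic). You instead search for a red anchor $R$ with $|R|\le n$, pass to the \emph{upper} sublattice $\QQ\big|_R^{[N]}$, and observe it is $\dot C^{(brb)}_{t-1}$-free so that induction applies after a color swap; your ``small'' case is that the bottom $n$ layers are entirely blue. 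In short, you prepend the anchor downward where the paper appends upward, at the price of invoking the symmetry $\tilde R(\dot C^{(brb)}_{t-1},Q_n)=\tilde R(\dot C^{(rbr)}_{t-1},Q_n)$, which the paper's version sidesteps. Neither approach offers a real advantage over the other.
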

\begin{proof}
We prove this statement using induction. The base case $t=3$ is shown in Lemma~\ref{lem:EHchain1}.
Suppose that $\widetilde{R}(\dot C^{(rbr)}_{t},Q_n)\le(t-1)n$ for some $t\ge3$.
We shall show that $\widetilde{R}(\dot C^{(rbr)}_{t+1},Q_n)\le tn.$
Let $N=tn$ and choose an arbitrary blue/red coloring of the host Boolean lattice $\QQ=\QQ([N])$.
Fix any vertex $Z\in\QQ([N])$ with $|Z|=N-n=(t-1)n$, and consider the sublattices $\QQ\big|^Z_\varnothing$ and $\QQ\big|_Z^{[N]}$. 
By induction, we find in $\QQ\big|^Z_\varnothing$ either a monochromatic copy of $Q_n$, which completes the proof, or a copy $\dot \cD$ of $\dot C^{(rbr)}_{t}$.
In the latter case, let $X\in \QQ\big|_Z^{[N]}$ be a vertex colored differently than the maximal vertex in $\dot \cD$.
Then $\dot \cD$ and $X$ form a copy of $\dot C^{(rbr)}_{t+1}$. 
If there exists no such vertex $X$, then the sublattice $\QQ\big|_Z^{[N]}$ is a monochromatic copy of $Q_n$.
\end{proof}
%

\begin{lemma}\label{lem:EHchain3}
For sufficiently large $n$,\:\:$\widetilde{R}(\dot C^{(rbr)}_{4},Q_n)> 2.02n$.
\end{lemma}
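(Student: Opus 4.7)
The plan is to construct, for sufficiently large $n$, a 2-coloring of $\QQ_N$ with $N = \lceil 2.24n \rceil$ that is simultaneously $\dot C^{(rbr)}_4$-free and contains no monochromatic copy of $Q_n$. I first observe that any layered coloring (where the color of $X$ depends only on $|X|$) is bounded by $N \le 2n-1$: avoiding $\dot C^{(rbr)}_4$ forces the alternating-block sequence of layer colors to avoid $rbrb$ as a subsequence, so the block pattern is $brbr$ or shorter; meanwhile avoiding a monochromatic $Q_n$ forces at most $n$ layers of each color, since any $n+1$ same-colored layers $\ell_0 < \dots < \ell_n$ support an $\cX$-good embedding of $Q_n$ with $|\phi(X)| = \ell_{|X|}$. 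Hence to exceed $2n$, the construction must be genuinely non-layered.

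For the construction, I would partition the ground set as $[N] = A \sqcup B$ with $|A| = 2n$ and $|B| = \lceil 0.24n \rceil$. Start with a saturated layered $brbr$-coloring on $\QQ(A)$ (so the $A$-direction alone sits at the layered threshold), and extend this to a coloring of $\QQ([N])$ by a randomized twist depending on $X \cap B$: color $X$ by its base $A$-color XORed with a bit $\xi(X)$, where $\xi$ is drawn from a carefully chosen distribution over monotone Boolean functions on $2^B$. The $B$-coordinate supplies $\Theta(n)$ additional ``degrees of freedom'' to destroy potential monochromatic $Q_n$ copies, while the monotonicity of $\xi$ ensures that color transitions along any chain remain compatible with a $brbr$ block pattern and thus do not create a new $rbrb$ chain.

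The verification proceeds via the probabilistic method. By Lemma \ref{embed_lem}, every copy of $Q_n$ in $\QQ([N])$ corresponds to an $\cX$-good embedding for some $n$-subset $\cX \subseteq [N]$, which is parameterized by a monotone map $Y\colon 2^\cX \to 2^{[N] \setminus \cX}$. I would bound the number of such embeddings by $\binom{N}{n} \cdot 2^{O(n)}$, and the probability that a given embedding is monochromatic by a quantity decaying exponentially in $|\cX \cap B|$ (since the $\xi$-values at the $2^n$ images must all agree with the base $A$-color). Averaging over $\cX$, a union bound gives an expected number of monochromatic $Q_n$-copies strictly less than $1$ whenever $|B| > 0.24 n$. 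The probabilistic method then yields an explicit valid coloring, establishing $\tilde R(\dot C^{(rbr)}_4, Q_n) > 2.24 n$.

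The main obstacle is maintaining $\dot C^{(rbr)}_4$-freeness after the random twist, because arbitrary bit-flips can easily create new $rbrb$-chains. I would address this by restricting $\xi$ to monotone (or nearly-monotone) Boolean functions on $2^B$, so that along any chain the composite color sequence still follows a $brbr$ pattern: the layered $A$-structure contributes at most three transitions in the allowed order, and the monotone $\xi$-twist contributes at most one additional toggle that can be absorbed into the pattern. The precise numerical constant $2.24$ then emerges from optimizing the block sizes of the layered $A$-coloring, the relative size of $B$, and the distribution of $\xi$ against the union bound.
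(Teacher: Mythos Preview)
Your proposal has two fatal gaps.

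\textbf{The construction is not $\dot C^{(rbr)}_4$-free.} The claim that a single monotone toggle ``can be absorbed into the pattern'' is false. Suppose the base layered coloring on $|X\cap A|$ has block pattern $brbr$ with all four blocks nonempty, and let $\xi\colon 2^B\to\{0,1\}$ be any non-constant monotone function; pick $Y_0\subsetneq Y_1\subseteq B$ with $\xi(Y_0)=0$ and $\xi(Y_1)=1$. Choose $A_1\subset A_2\subset A_3\subseteq A$ with $|A_1|$ in the first red block, $|A_2|$ in the second blue block, and $|A_3|$ in the second red block. Then the chain
\[
A_1\cup Y_0\ \subset\ A_1\cup Y_1\ \subset\ A_2\cup Y_1\ \subset\ A_3\cup Y_1
\]
has base colors $r,r,b,r$ and $\xi$-values $0,1,1,1$, hence final colors $r,b,r,b$: a copy of $\dot C^{(rbr)}_4$. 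And if $\xi$ is constant, your coloring is purely layered in $|X\cap A|$; since $|A|=2n$, the sublattice $\QQ(A)\subseteq\QQ([N])$ has $2n+1$ such layers and therefore, by the argument in your own first paragraph, contains a monochromatic $Q_n$. So the scheme fails for every choice of $\xi$.

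\textbf{The union bound is off by a tower.} The number of $\cX$-good embeddings $\QQ(\cX)\to\QQ([N])$ equals the number of monotone maps $2^{\cX}\to 2^{[N]\setminus\cX}$, which is $M(n)^{N-n}$ where $M(n)$ is the $n$-th Dedekind number. Since $M(n)\ge 2^{\binom{n}{\lfloor n/2\rfloor}}$, this count is doubly exponential in $n$, not $2^{O(n)}$. No single-exponentially small per-embedding probability can compensate.

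The paper's argument avoids both obstacles by a different mechanism. It draws two random \emph{antichains} $\cS$ in layer $(1-c)n$ and $\cT$ in layer $(1+2c)n$, refined to be mutually incomparable, and uses them to define an up-set $\cV_\cS$ (blue) and a down-set $\cV_\cT$ (red) in the middle band. Because $\cV_\cS$ is upward-closed, $\cV_\cT$ is downward-closed, and they are disjoint, along any chain the color in the middle can only switch from red to blue; together with the outer layers this gives a deterministic $b^*r^*b^*r^*$ pattern, hence $\dot C^{(rbr)}_4$-freeness for \emph{every} outcome. The randomness is used not for a union bound over embeddings but only to secure a \emph{density} property of $\cS$ (and symmetrically $\cT$): for every disjoint pair $A',B'$ with $|A'|=n/2$, $|B'|=n$ there is some $S\in\cS$ with $S\subseteq A'\cup B'$ and $|S\cap B'|\le n/2$. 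Once such families exist, one argues directly: given any $\cX$-good red embedding $\phi$, take $A'\subseteq\phi(\varnothing)$ and $B'=\cX$, locate the promised $S$, and check that $\phi(S\cap\cX)$ lies in $\cV_\cS$, which is blue --- a contradiction. The constant $0.24$ arises from balancing the inclusion probability $0.355^n$ against the count of comparable pairs and the density requirement.
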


\noindent\textbf{Outline of the proof idea  for Lemma \ref{lem:EHchain3}:} Let $ c  =0.02$. Let $n$ be a natural number, and let $N=(2+ c )n$.
First, in Lemma \ref{lem:EHchain_main}, we use a probabilistic argument to find two families $\cS$ and $\cT$ of vertices in the Boolean lattice $\QQ([N])$ in layers $(1- c )n$ and $(1+2 c )n$, respectively, which have two properties:
\begin{enumerate}
\item[(1)] every vertex in $\cS$ is incomparable to every vertex in $\cT$, and 
\item[(2)] both $\cS$ and $\cT$ are ``dense'' in their respective layer.
\end{enumerate}

Afterwards, we formally define a blue/red coloring in Construction \ref{constr:EHchain}, as illustrated in Figure~\ref{fig:EHcoloring}. 
We need (1) to ensure that this construction is well-defined.
As a final step, we shall show that there is no monochromatic copy of $Q_n$ and no copy of  $\dot C^{(rbr)}_{4}$  in our construction, for which we use (2).
Recall that we omit floors and ceilings where appropriate.

\begin{figure}[h]
\centering
\includegraphics[scale=0.62]{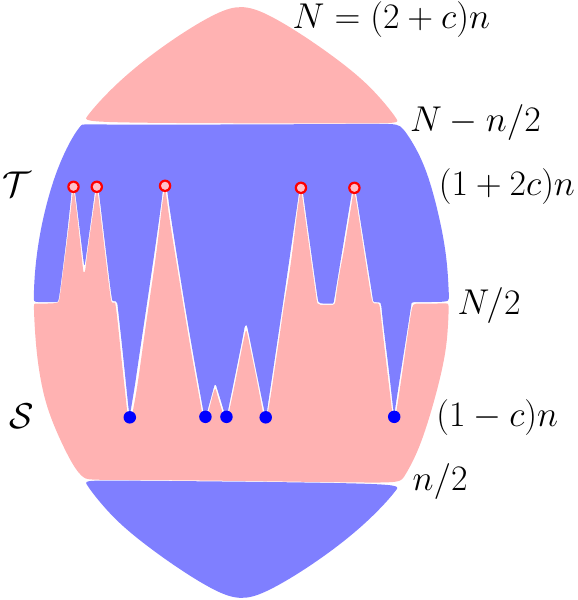}
\caption{Blue/red coloring of $\QQ([N])$ based on $\cS$ and $\cT$ in Construction \ref{constr:EHchain}.}
\label{fig:EHcoloring}
\end{figure}

\begin{lemma}\label{lem:EHchain_main}
Let $ c =0.02$. Let $N=(2+ c )n$ for sufficiently large $n$.  
Then there exist families $\cS$ and $\cT$ of vertices in $\QQ([N])$ with the following properties:
\begin{enumerate}
\item[(i)] For every $S\in\cS$,\:\:$|S|=(1- c )n$. For every $T\in\cT$,\:\:$|T|=(1+2 c )n$.
\item[(ii)] Every two vertices $S\in\cS$ and $T\in\cT$ are incomparable.
\item[(iii)] For every pair of disjoint sets $A,B\subseteq [N]$ with $|A|=\tfrac{n}{2}$ and $|B|=n$, there exists an $S\in\cS$ with $S \subseteq A\cup B$ and $|B\cap S|\le \tfrac{n}{2}$.
\item[(iv)] For every pair of disjoint sets $A,B\subseteq [N]$ with $|A|=\tfrac{n}{2}$ and $|B|=n$, there exists a $T\in\cT$ with $T\supseteq [N]\setminus (A\cup B)$ and $|B\setminus T|\le  \tfrac{n}{2}$.
\end{enumerate}
\end{lemma}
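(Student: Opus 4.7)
The plan is a probabilistic construction with alteration. For parameters $p,q\in(0,1)$ to be tuned, independently include each $(1-c)n$-subset of $[N]$ in a random family $\cS_0$ with probability $p$, and each $(1+2c)n$-subset in $\cT_0$ with probability $q$. Set $\cT:=\cT_0$ and
$$\cS:=\{S\in\cS_0:S\not\subseteq T\text{ for every }T\in\cT_0\}.$$
Property (i) is then built in, and (ii) holds since $|S|<|T|$ forbids $T\subseteq S$ while the definition of $\cS$ forbids $S\subseteq T$. Thus the task reduces to choosing $p,q$ so that (iii) and (iv) hold simultaneously with positive probability.

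Fix a disjoint pair $(A,B)$ with $|A|=n/2$ and $|B|=n$, and parametrize by $k=|B\cap S|$ and $b=|B\cap T|$. The numbers of $S$ satisfying the hypothesis of (iii) and $T$ satisfying the hypothesis of (iv) are
\begin{align*}
N_S(A,B)&=\sum_{k=\lceil(\tfrac{1}{2}-c)n\rceil}^{\lfloor n/2\rfloor}\binom{n}{k}\binom{n/2}{(1-c)n-k},\\
N_T(A,B)&=\sum_{b=\lceil n/2\rceil}^{\lfloor(\tfrac{1}{2}+c)n\rfloor}\binom{n}{b}\binom{n/2}{(\tfrac{1}{2}+c)n-b}.
\end{align*}
A Lagrange-multiplier analysis of the entropy expressions $H(k/n)+\tfrac{1}{2}H(2(1-c-k/n))$ and $H(b/n)+\tfrac{1}{2}H(2(\tfrac{1}{2}+c-b/n))$ shows that their unconstrained maxima sit just outside the admissible ranges, so the constrained maxima occur at the boundary $k=n/2$ (resp.\ $b=n/2$). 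This yields $\log N_S(A,B),\,\log N_T(A,B)\ge \nu n-O(\log n)$ with $\nu:=1+\tfrac{1}{2}H(2c)$. On the other hand, the number of $(1+2c)n$-sets containing any fixed $(1-c)n$-set is $I:=\binom{(1+2c)n}{3cn}=2^{\mu n+o(n)}$ with $\mu:=(1+2c)H(\tfrac{3c}{1+2c})$. At $c=0.24$ one has $\nu\approx 1.4995$ and $\mu\approx 1.4795$, so $\nu-\mu\approx 0.02>0$. The gap $\nu-\mu$ vanishes exactly at $c=1/4$, which explains the choice of $c$.

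Set $q:=1/(2I)$. For any fixed $(1-c)n$-set $S$, $\Pr[\exists T\in\cT_0:S\subseteq T]\le qI\le\tfrac{1}{2}$, so the expected number of $S\in\cS$ good for $(A,B)$ is at least $\tfrac{p}{2}N_S(A,B)$. Choose $p$ slightly above $\log M/N_S(A,B)$, where $M=\binom{N}{n/2}\binom{N-n/2}{n}=2^{O(n)}$ is the total number of disjoint pairs $(A,B)$; this is possible since $p$ is only required to be of order $\operatorname{poly}(n)\cdot 2^{-\nu n}$. For (iv), a Chernoff bound applied to $qN_T(A,B)=2^{(\nu-\mu)n-O(1)}$ independent Bernoullis, together with a union bound over $M$ pairs, is immediate. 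For (iii), let $N_S^{\mathrm{free}}(A,B,\cT_0)$ denote the number of $S$ good for $(A,B)$ that survive the alteration. The survival indicators are decreasing functions of $\cT_0$, hence positively correlated by FKG, and a second-moment estimate (or Janson's inequality applied to the complementary increasing events) gives $N_S^{\mathrm{free}}\ge\tfrac{1}{4}N_S(A,B)$ with probability $1-o(1/M)$; conditioning on $\cT_0$ and using the independent $\cS_0$-sampling, one further Chernoff plus union bound complete (iii).

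The main obstacle is the tight entropic inequality $\nu>\mu$: both unconstrained optima lie on the wrong side of the feasibility region, so the whole argument reduces to verifying that the boundary value $1+\tfrac{1}{2}H(2c)$ exceeds $(1+2c)H(\tfrac{3c}{1+2c})$, with a margin of $\approx 0.02n$ that must absorb polynomial losses from Stirling, concentration, and the union bound. A secondary technical point is the correlation between the survival indicators for different $S$, which is handled via FKG and variance estimates based on how two $(1-c)n$-sets $S_1,S_2$ can share a common super-set of size $(1+2c)n$.
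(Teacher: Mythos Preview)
Your approach matches the paper's: sample $\cS_0,\cT_0$ independently at exponentially small rates, prune comparable pairs, and verify (iii)--(iv) by a union bound over the $M=2^{\Theta(n)}$ disjoint pairs $(A,B)$; the governing constraint is exactly the entropic gap $\nu>\mu$ you isolate (the paper encodes it as $0.355^n\binom{(1+2c)n}{3cn}\le 0.99^n$ together with $0.355^n\gamma_s\to\infty$). Cosmetically, the paper prunes \emph{symmetrically} (both $S$ and $T$ are removed whenever $S\subseteq T$) and takes $p=q=0.355^n$, so that $qI\le 0.99^n\to 0$ rather than your $qI=\tfrac12$.

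There is, however, a genuine gap in your concentration step for (iii). You need the event $N_S^{\mathrm{free}}(A,B)<\tfrac14\,N_S(A,B)$ to have probability $o(1/M)$, i.e.\ exponentially small in $n$. A second-moment estimate cannot deliver this: the survival indicators are \emph{positively} correlated (as you correctly note via FKG), which only inflates the variance, and Chebyshev gives at best a polynomial-type tail, nowhere near $2^{-\Theta(n)}$. Janson's inequality bounds $\PPP(\bigcap_i G_i)$ (equivalently $\PPP(\bigcup_i G_i^c)$), not the upper tail of the \emph{count} of indices with $G_i^c$, so it does not apply either. A working fix is to dominate the number of killed $S_i$ by the sum of independent terms $D'=\sum_{T}d_T\,[T\in\cT_0]$, where $d_T=|\{i:S_i\subseteq T\}|\le\binom{(1+2c)n}{3cn}=I$; Bernstein's inequality then gives $\PPP(D'>\tfrac34\gamma_s)\le\exp\bigl(-\Omega(\gamma_s/I)\bigr)=\exp\bigl(-\Omega(2^{(\nu-\mu)n})\bigr)$, which easily beats $M$. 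Alternatively, shrink $q$ so that $qI\to 0$ (as the paper does) and bound $\EEE_{\cT_0}\bigl[(1-p)^{N_S^{\mathrm{free}}}\bigr]$ directly via the moment-generating function of $D'$, obtaining $\le\exp\bigl(-p\gamma_s(1-O(qI))\bigr)$ without any separate tail estimate.
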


\begin{proof}
First, we introduce several families of vertices in $\QQ([N])$.
Let $s=(1- c )n$ and $t=(1+2 c )n$, and denote the corresponding layers of $\QQ([N])$ by 
$$\cL_s=\big\{Z\in\QQ([N]):~|Z|=s\big\}\quad \text{ and }\quad \cL_t=\big\{Z\in\QQ([N]):~|Z|=t\big\}.$$
Let 
$$\text{Cone}_s=\big\{ \cK_s(A,B) : ~ A,B\subseteq [N],\ A\cap B=\varnothing,\ |A|=\tfrac{n}{2},\ |B|=n\big\}$$
be a collection of \textit{cones} $\cK_s(A,B)$, which are defined as
$$\cK_s(A,B)=\big\{S\in \cL_s : ~ S \subseteq A\cup B,\ |B\cap S|\le \tfrac{n}{2}\big \},$$
as illustrated in Figure \ref{fig:QnEH_cones}. Similarly, let
$$\text{Cone}_t=\big\{ \cK_t(A,B) : ~ A,B\subseteq [N],\ A\cap B=\varnothing,\ |A|=\tfrac{n}{2},\ |B|=n\big\},$$
where a \textit{cone} $\cK_t(A,B)$ is a family of vertices given by
$$\cK_t(A,B)=\big\{T\in \cL_t : ~ T\supseteq [N]\setminus (A\cup B),\ |B\setminus T|\le  \tfrac{n}{2}\big \}.$$
Furthermore, we define the \textit{neighborhood} of a vertex $S\in\cL_s$ as 
$$\cN_t(S)=\big\{T\in \cL_t : ~ T\supseteq S \big\}.$$

\begin{figure}[h]
\centering
\includegraphics[scale=0.62]{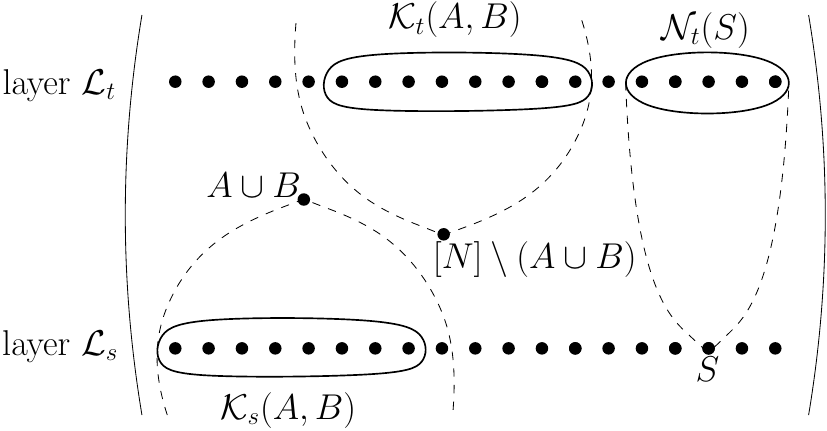}
\caption{Examples for families $\cK_s(A,B)$, $\cK_t(A,B)$, and $\cN_t(S)$.}
\label{fig:QnEH_cones}
\end{figure}

We shall find families $\cS$ and $\cT$ such that
\begin{enumerate}
\item[(i')] $\cS\subseteq \cL_s$ and $\cT\subseteq \cL_t$,
\item[(ii')] for every $S\in \cS$,\:\:$\cN_t(S)\cap \cT=\varnothing$,
\item[(iii')] for every $\cK\in\text{Cone}_s$, there exists an $S\in\cK\cap \cS$, and
\item[(iv')] for every $\cK\in\text{Cone}_t$, there is a $T\in\cK\cap\cT$.
\end{enumerate}
Each property (i') to (iv') implies the respective property (i) to (iv).
Summarizing these properties, the subposet $\cS\cup\cT$ can be described as an antichain that is a ``transversal'' of $\text{Cone}_s\cup \text{Cone}_t$.

To find the desired $\cS$ and $\cT$, we consider the following two random families. Let $p=0.77^n$.
Randomly draw a family $\cS'$ by independently including each $S\in \cL_s$ with probability~$p$.
Similarly, draw a family $\cT$ by including each $T\in \cL_t$ independently with probability~$p$.

We say that an event $E(n)$ holds with \textit{high probability}, abbreviated by \textit{w.h.p.}, if $\PPP(E(n))\to 1$ for $n\to \infty$.
In the following, we shall show that with high probability, $\cS'\cup\cT$ has a \textit{large} intersection with every $\cK\in \text{Cone}_s\cup \text{Cone}_t$, 
i.e., $\cS'\cup\cT$ is a ``strong transversal'' of $\text{Cone}_s\cup \text{Cone}_t$.
Afterwards, we deterministically refine $\cS'$, by deleting vertices which are ``bad'' with respect to property (ii'), resulting in a family $\cS\subseteq\cS'$. 
Lastly, we shall verify that $\cS$ has a \textit{non-empty} intersection with every cone $\cK\in \text{Cone}_s$.


Stirling's formula provides that $N!=\Theta(\sqrt{N})\left(\frac{N}{e}\right)^N$.
Throughout this proof, we repeatedly apply the following consequence of Stirling's formula. 
For positive constants $C> d$,
\begin{eqnarray}
\binom{C n}{d n}&=&\frac{\Theta(1)\sqrt{Cn}}{\sqrt{dn}\sqrt{(C-d)n}} \frac{(Cn)^{Cn}}{e^{Cn}} \frac{e^{dn}}{(dn)^{dn}}\frac{e^{(C-d)n}}{((C-d)n)^{(C-d)n}}\nonumber \\
&=&\Theta\left(\frac{1}{\sqrt{n}}\right) \left(\frac{C^C}{d^d (C-d)^{C-d}}\right)^n.\label{eq:QnEH:stirling}
\end{eqnarray}

\noindent \textbf{Claim 1:} With high probability, every cone $\cK\in\text{Cone}_s$ has an intersection with the (unrefined) family $\cS'$ of size
$|\cK\cap\cS'|\ge 1.66^n$.\smallskip\\
\textit{Proof of Claim 1.}
For arbitrary fixed, disjoint $A,B\subseteq [N]$ with $|A|=\tfrac{n}{2}$ and $|B|=n$, let $\cK=\cK_s(A,B)\in\text{Cone}_s$.
Each element in $\cK$ is included in $\cS'$ independently with probability $p$. Thus,
$$|\cK\cap\cS'|\sim \text{Bin}\big(|\cK|,p\big),\quad \text{ and }\quad \mathbb{E}(|\cK\cap\cS'|)=|\cK|\cdot p= |\cK|\cdot 0.77^n.$$

We shall bound $|\cK|$ from below. 
If $S\in \cK_s(A,B)$, then $S$ consists of $s$ elements, so $|B\cap S|=|S|-|A\cap S|\ge s-|A|\ge (\tfrac12-c)n$.
Thus, $(\tfrac12-c)n\le |B\cap S|\le \tfrac{n}{2}$.
Using (\ref{eq:QnEH:stirling}) and $c=0.02$, we see that the size of $\cK$ is 
\begin{eqnarray*}
|\cK|&=&\sum_{i=0}^{ c  n} \binom{|A|}{s-(n/2-i)}\binom{|B|}{n/2-i}\\
&\ge &\binom{|A|}{ s-n/2 }\binom{|B|}{n/2} \\
&=& \binom{n/2}{ n/2 - c n}\binom{n}{n/2}\\
&=& \Theta\left(\frac{1}{n}\right) \left(\frac{1}{c ^{ c }\ (1/2- c )^{1/2- c }\ (1/2)^{1/2}} \right)^n \\
&\ge & 2.17^n,
\end{eqnarray*}
where the last bound holds for sufficiently large $n$. 
In particular, for large $n$,
$$
\mathbb{E}(|\cK\cap\cS'|)=|\cK|\cdot p \ge 2.17^n \cdot 0.77^n \ge 2 \cdot 1.66^n.
$$
The multiplicative form of Chernoff's inequality, see Corollary 23.7 in Frieze and Karo\'{n}ski \cite{FK}, provides that for a random variable $X$ with binomial distribution and for $0<a<1$,
$$
\PPP\big(X\le (1-a)\EEE(X)\big)\le \exp\left(-\frac{\EEE(X) a^2}{2}\right).
$$
Using this inequality for $X=|\cK\cap\cS'|$ and $a=\tfrac12$, 
\begin{eqnarray*}
\PPP\left(|\cK\cap\cS'|< 1.66^n\right) & \le & \PPP\left(|\cK\cap\cS'|\le \left(1-\frac{1}{2}\right) \mathbb{E}(|\cK\cap\cS'|)\right) \\
& \le & \exp\left(-\frac{\mathbb{E}(|\cK\cap\cS'|)}{8}\right)\\
&\le & \exp\left(-4\cdot 1.66^n\right)
\end{eqnarray*}
Let $X_{\cK\cap\cS'}$ be the random variable counting cones $\cK\in\text{Cone}_s$ such that $|\cK\cap\cS'|<1.66^n$.
The expected value of $X_{\cK\cap\cS'}$ is
\begin{eqnarray*}
\EEE(X_{\cK\cap\cS'}) & = & \sum_{\cK\in\text{Cone}_s} \PPP\left (|\cK\cap\cS'|<1.66^n\right)\\
&\le & \sum_{\substack{B\subseteq[N],\\ |B|=n}} \ \  \sum _{\substack{A\subseteq[N]\setminus B,\\ |A|=n/2}}   \exp\left(-4\cdot 1.66^n\right) \\
&\le &2^{2N} \exp\left(-4\cdot1.66^n\right)\\
&\le &2^{4.04n} \exp\left(-4\cdot1.66^n\right) \to 0\text{ for }n\to \infty,
\end{eqnarray*}
thus w.h.p., $X_{\cK\cap\cS'}=0$, i.e., every cone $\cK\in\text{Cone}_s$ has a large intersection with $\cS'$. This proves Claim 1.
\\

\noindent \textbf{Claim 2:} With high probability, $|\cK\cap\cT|\ge 1.66^n$ for every $\cK\in\text{Cone}_t$. In particular, w.h.p., $\cT$ has property (iv'). \smallskip\\
\textit{Proof of Claim 2.}
This claim can be shown similarly to Claim 1, so we only provide a sketch of the proof.
Fix a $\cK=\cK_t(A,B)\in\text{Cone}_t$. Note that 
$$|\cK\cap\cT|\sim \text{Bin}\big(|\cK|,p\big),\quad \text{ and }\quad \mathbb{E}(|\cK\cap\cT|)=|\cK|\cdot p= |\cK|\cdot 0.77^n.$$
The size of $\cK$ is bounded from below as follows:
\begin{eqnarray*}
|\cK|&=&  \sum_{i=0}^{ c  n} \binom{|A|}{t-\big|[N]\setminus (A\cup B)\big|-(n/2+i)}\binom{|B|}{n/2+i}\\
& \ge & \binom{n/2}{ c  n}\binom{n}{n/2} \ge  2.17^n.
\end{eqnarray*}
Thus, $\mathbb{E}(|\cK\cap\cT|)=|\cK|\cdot p \ge 2 \cdot 1.66^n$.
Analogously to Claim 1, this implies that w.h.p., $|\cK\cap\cT|\ge 1.66^n$ for every cone $\cK\in\text{Cone}_t$.
\\

We say that a family of vertices $\cK\subseteq \cL_s$ is \textit{bad} if for every $S\in\cK\cap\cS'$, the intersection $\cN_t(S)\cap \cT$ is non-empty.
We shall show that w.h.p., there exists no bad cone $\cK\in\text{Cone}_s$.
\\

\noindent \textbf{Claim 3:} Let $\cK\in\text{Cone}_s$ such that $|\cK\cap\cS'|\ge 1.66^n$.
Then $\PPP( \cK \text{ is bad})\le 0.98^{n(1.04)^{n}}$. \smallskip\\
\textit{Proof of Claim 3.}
First, we evaluate $\PPP( \cK' \text{ is bad})$ for a subfamily $\cK'\subseteq\cK\cap\cS'$.
We construct~$\cK'$ such that the neighborhoods $\cN_t(S)$, $S\in\cK'$, are pairwise disjoint, by using a greedy process.
Let $\cK^{0}=\cK\cap\cS'$. Pick a vertex $S_1\in\cK^{0}$ to be added to~$\cK'$.
Let $\cK^{1}$ be the set of remaining vertices $S\in\cK^{0}\setminus\{S_1\}$ for which the neighborhood $\cN_t(S)$ is disjoint from $\cN_t(S_1)$.
Iteratively for $i\ge 2$, as long as $\cK^{i-1}\neq \varnothing$, pick a vertex $S_{i}\in\cK^{i-1}$ to be added to $\cK'$.
Let $\cK^{i}\subseteq \cK^{i-1}$ be the set of vertices $S\in\cK^{i-1}\setminus\{S_{i}\}$ for which $\cN_t(S)\cap \cN_t(S_{i})=\varnothing$.

If $\cK^{i-1}=\varnothing$, we stop the process, and let $\cK'=\{S_1,\dots,S_{i-1}\}$. 
By construction, the families $\cN_t(S)$, $S\in\cK'$, are pairwise disjoint.
We shall bound $|\cK'|$ from below by overcounting the vertices excluded from $\cK'$ in every step $i$ of this process, 
i.e., those vertices $S\in \cK^{i-1}$ such that the neighborhoods of $S$ and $S_i$ have a non-empty intersection.
Recall that $N=(2+c)n$, $s=(1-c)n$, and $t=(1+2c)n$ for $c=0.02$. By~(\ref{eq:QnEH:stirling}), 
\begin{equation}
|\cN_t(S_i)|=\binom{N-s}{t-s}=\binom{(1+2 c )n}{3cn}\le \left(\frac{1.04^{1.04}}{0.06^{0.06} \cdot 0.98^{0.98}}\right)^n\le 1.26^{n}.
\label{eq:NtS}
\end{equation}

Similarly, there are at most $1.26^{n}$ vertices $S\in \cL_s$ such that $S\subseteq T$ for each $T\in \cN_t(S_i)$.
Thus, there are at most $1.26^{2n}$ vertices $S$ in $\cL_s$ such that $\cN_t(S)\cap \cN_t(S_1)\neq \varnothing$, see also Figure~\ref{fig:QnEH_NtS}.
In particular, $|\cK^{i}\setminus \cK^{i-1}|\le 1.26^{2n}$, independently of $i$.
Using that $|\cK\cap\cS'|\ge 1.66^n$,
we can bound the number of steps in the greedy process from below by
$$
|\cK'| \ge \frac{|\cK\cap\cS'|}{1.26^{2n}} \ge \frac{1.66^n}{1.26^{2n}}  \ge  \left(\frac{1.66}{1.26^2}\right)^{n} \ge   1.04^{n}.
$$
\begin{figure}[h]
\centering
\includegraphics[scale=0.62]{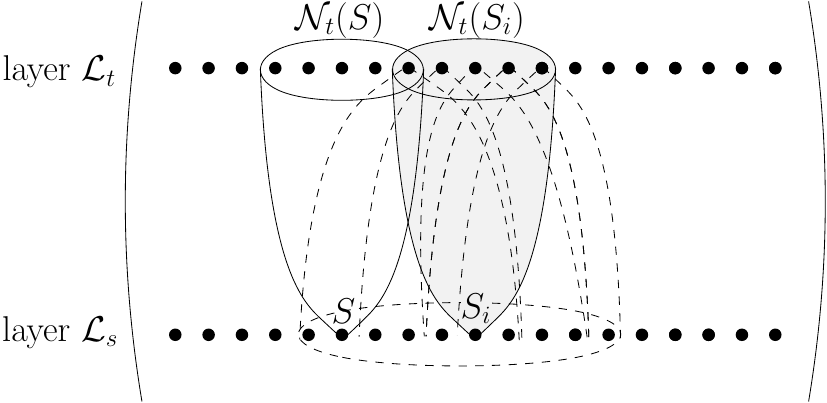}
\caption{Vertex $S\in\cL_s$ for which the neighborhood $\cN_t(S)$ intersects $\cN_t(S_i)$.}
\label{fig:QnEH_NtS}
\end{figure}

Our goal is to bound the probability that the cone $\cK$ is bad.
If $\cK$ is bad, then in particular $\cK'$ is bad, so
$$\PPP( \cK \text{ is bad}) \le  \PPP( \cK' \text{ is bad})=\PPP(\text{for any }S\in\cK'\text{,}\:\:\cN_t(S)\cap \cT\neq\varnothing),$$
where we used that $\cK'\subseteq\cS'$.
We defined $\cK'$ such that the neighborhoods $\cN_t(S)$, $S\in \cK'$, are pairwise disjoint.
In particular, the probability that a vertex $T\in \cN_t(S)$ is included in $\cT$ is independent of every $T'\in \cN_t(S')$, $S'\in \cK'$.
Thus,
$$
\PPP( \cK' \text{ is bad}) = \prod_{S\in \cK'} \PPP(\cN_t(S)\cap \cT\neq\varnothing).
$$
Next, we bound $\PPP(\cN_t(S)\cap \cT\neq\varnothing)$ for any fixed $S\in\cK'$. By (\ref{eq:NtS}),
$$
\PPP(\cN_t(S)\cap \cT\neq\varnothing)  \le  \sum_{T\in \cN_t(S)} \PPP(T\in \cT) = |\cN_t(S)| \cdot p  \le  \left(1.26 \cdot 0.77\right)^{n} \le  0.98^{n}.
$$
So,
$$
\PPP( \cK \text{ is bad}) \le \PPP( \cK' \text{ is bad}) \le  \prod_{S\in \cK'}  0.98^{n} =  0.98^{n|\cK'|} \le 0.98^{n\left(1.04\right)^{n}}.
$$

\noindent \textbf{Claim 4:} With high probability, there is no bad cone $\cK\in\text{Cone}_s$. \smallskip\\
\textit{Proof of Claim 4.}
By Claim 1, we have that with high probability, $|\cK\cap\cS'|\ge 1.66^n$ for every $\cK\in\text{Cone}_s$. From now on, suppose that $\cS'$ has this property.
Let $X_\text{bad}$ be the random variable counting the number of bad $\cK\in\text{Cone}_s$.
By Claim 3, the expected value of $X_\text{bad}$ is
\begin{eqnarray*}
\EEE(X_\text{bad}) & = & \sum_{\cK\in\text{Cone}_s} \PPP( \cK \text{ is bad})\\
&\le & \sum_{\substack{B\subseteq[N],\\ |B|=n}} \ \  \sum _{\substack{A\subseteq[N]\setminus B,\\ |A|=n/2}} \PPP( \cK \text{ is bad}) \\
&\le &2^{2N} 0.98^{n\left(1.04\right)^{n}} \\
&\le &2^{4.04n} 0.98^{n\left(1.04\right)^{n}} \to 0\text{ for }n\to \infty,
\end{eqnarray*}
thus, by Markov's inequality, $\PPP(X_\text{bad} \ge 1) \to 0$, and so, w.h.p., $X_\text{bad}=0$.
In particular, w.h.p., both conditions ~~ $|\cK\cap\cS'|\ge 1.66^n$ for every $\cK\in\text{Cone}_s$ ~~ and ~~ $X_\text{bad}=0$ ~~ are fulfilled, which proves Claim 4.
\\

By Claims 2 and 4, we know that w.h.p., for the randomly selected families $\cS'\subseteq\cL_s$ and $\cT\subseteq \cL_t$, there exists no bad cone in $\text{Cone}_s$,
and for every $\cK\in\text{Cone}_t$,\:\:$\cK\cap\cT\neq \varnothing$. This implies in particular the existence of two families $\cS'$ and $\cT$ with these properties.

For such fixed $\cS'$ and $\cT$, we refine the family $\cS'$ as follows.
Let $\cS$ be obtained from $\cS'$ by deleting all vertices $S\in\cS'$ for which $\cN_t(S)\cap \cT \neq\varnothing$, i.e., for which there is a $T\in\cT$ such that $S\subseteq T$.
By construction, $\cS$ and $\cT$ possess properties (i') and (ii').
Since there is no bad $\cK\in\text{Cone}_s$, there exists an $S\in\cK\cap\cS'$, for which the intersection $\cN_t(S)\cap \cT$ is non-empty.
Using the definition of $\cS$, we know that $S\in \cS$, thus $\cS$ has property (iii').
Furthermore, $\cT$ has property (iv').
Therefore, the families $\cS$ and $\cT$ are as desired.
\end{proof}

\begin{construction}\label{constr:EHchain}
Let $n$ and $N$ be integers such that $N\ge 2n$.
Let $\cS$ and $\cT$ be two families of vertices in $\QQ([N])$ such that for every $S\in\cS$ and $T\in\cT$, it holds that $|S|<|T|$ and $S\not \subseteq T$.
We define a blue/red coloring of the Boolean lattice $\QQ([N])$.

Let $\cV_\cT$ be the set of all vertices $Z\in\QQ([N])$ with $|Z|\ge \tfrac{n}{2}$ such that there exists a $T\in\cT$ with $Z\subseteq T$.
Similarly, let $\cV_\cS$ be the set of all vertices $Z\in\QQ([N])$ for which $|Z|\le N-\tfrac{n}{2}$ and there is an $S\in\cS$ with $Z\supseteq S$.
Observe that $\cV_\cT$ and $\cV_\cS$ are disjoint, since the vertices of $\cS$ and $\cT$ are pairwise incomparable.
Let $\cW_\cS$ be the set of vertices $Z\in\QQ([N])$ with $\tfrac{n}{2}\le|Z|\le \tfrac{N}{2}$ and $Z\notin \cV_\cS$.
Similarly, let $\cW_\cT$ be the set of vertices $Z\in\QQ([N])$ for which $\tfrac{N}{2}<|Z|\le N-\tfrac{n}{2}$ and $Z\notin \cV_\cT$.

As illustrated in Figure \ref{fig:EHVTWT}, we color $Z\in\QQ([N])$ in
\begin{itemize}
\item blue if $|Z|< \tfrac{n}{2}$,
\item red if $Z\in \cV_\cT\cup\cW_\cS$,
\item blue if $Z\in \cV_\cS\cup\cW_\cT$,
\item red if $|Z|> N-\tfrac{n}{2}$.
\end{itemize}
\end{construction}

\noindent Note that this construction is well-defined if and only if $\cS$ and $\cT$ are element-wise incomparable.

\begin{figure}[h]
\centering
\includegraphics[scale=0.62]{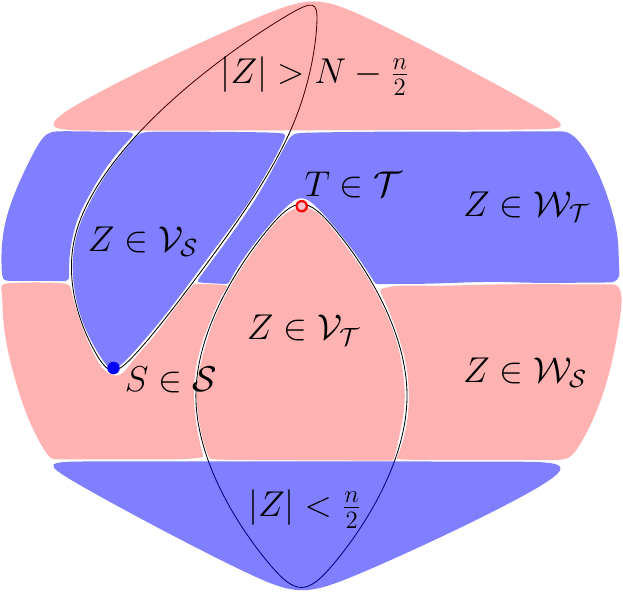}
\caption{Vertices in the sets $\cV_\cT, \cV_\cS, \cW_\cT$ and $\cW_\cS$ for exemplary $S\in \cS$ and $T\in\cT$.}
\label{fig:EHVTWT}
\end{figure}

\begin{proof}[Proof of Lemma \ref{lem:EHchain3}]
Let $ c =0.02$, and let $N=(2+ c )n$ for sufficiently large $n$. 
Let~$\cS$ and~$\cT$ be two families with properties as described in Lemma \ref{lem:EHchain_main}.
Color the Boolean lattice $\QQ([N])$ as defined in Construction \ref{constr:EHchain}, and let $\cV_\cS$ and $\cV_\cT$ as in Construction~\ref{constr:EHchain}.
It is easy to see that this coloring is $\dot C^{(rbr)}_{4}$-free, by using the observation that for every two vertices $A,B\in \cV_\cT\cup\cW_\cS$ with $A\subseteq B$, the subposet $\{Z\in\QQ([N]) : ~ A\subseteq Z \subseteq B\}$ is red.
We shall show that there is no monochromatic copy of $Q_n$, which implies that $\widetilde{R}(\dot C^{(rbr)}_{4},Q_n)> N=2.02n$.

Assume towards a contradiction that there exists a red copy $\QQ'$ of $Q_n$ in $\QQ([N])$. 
By Lemma \ref{lem:embed}, there is an $n$-element $\bX\subseteq[N]$ such that $\QQ'$ is the image of an embedding
$\phi\colon \QQ(\bX)\to\QQ([N])$ such that $\phi(X)\cap\bX=X$ for every $X\subseteq\bX$.
Note that $|\phi(\varnothing)|\ge n/2$, because $\phi(\varnothing)$ is red.
Let $A$ be an arbitrary subset of $\phi(\varnothing)$ of size $|A|=n/2$, see Figure \ref{fig:QnEH_phiS}. 
Since $\phi(\varnothing)\cap \bX=\varnothing$, the subsets $A$ and $\bX$ are disjoint.
By property (iii) in Lemma \ref{lem:EHchain_main}, we know that there exists an $S\in\cS$ with $S \subseteq A\cup \bX$ and $|S\cap \bX|\le n/2$. 

\begin{figure}[h]
\centering
\includegraphics[scale=0.62]{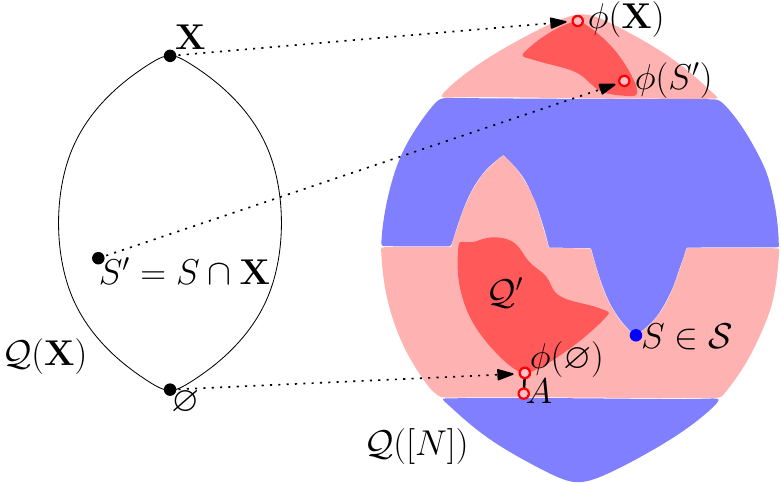}
\caption{Embedding $\phi$ of $\QQ(\bX)$ into $\QQ[N]$.}
\label{fig:QnEH_phiS}
\end{figure}

Let $S'=S\cap \bX$. We analyze $\phi(S')$ to find a contradiction.
First, we claim that $S\subseteq \phi(S')$. Indeed, using that $\phi$ is an embedding, we know that $S\cap A\subseteq A\subseteq \phi(\varnothing) \subseteq \phi(S')$.
Moreover, recall that $\phi(X)\cap \bX=X$ for all $X\subseteq \bX$, so $S'\subseteq \phi(S')$. Therefore, $S=(S\cap A)\cup S'\subseteq \phi(S')$. 
Because $S\in\cS$ and $S\subseteq \phi(S')$, either $\phi(S')\in\cV_\cS$ or $|\phi(S')|> N-\tfrac{n}{2}$.
Recall that $\phi(S')$ is a vertex in the red poset $\QQ'$, but every vertex in $\cV_\cS$ is blue.
This implies that $\phi(S')\notin\cV_\cS$, so $|\phi(S')|>N-\tfrac{n}{2}$.
However, because $\phi$ has the property that $\phi(X)\cap \bX=X$ for all $X\subseteq \bX$, $\phi(S')\cap (\bX\setminus S')=\varnothing$, so 
$$|\phi(S')|\le N - |\bX\setminus S'|=N - |\bX|+|S\cap \bX|\le N-\tfrac{n}{2},$$ a contradiction.
By a symmetric argument, there exists no blue copy of $Q_n$. Therefore, $\widetilde{R}(\dot C^{(rbr)}_{4},Q_n)>N$.
\end{proof}

In particular, we find that $R(Q_n,Q_n)\ge \widetilde{R}(\dot C^{(rbr)}_{4},Q_n)>2.02n$.
We remark that with the here presented approach it is not possible to push the lower bound on $R(Q_n,Q_n)$ higher than $\widetilde{R}(\dot C^{(rbr)}_{4},Q_n)$, i.e., higher than $3n$.
\bigskip

\noindent \textbf{Acknowledgments:}~\quad  The author would like to thank Maria Axenovich for proposing this problem and for helpful comments. The author thanks Ryan Martin, Torsten Ueckerdt, and two anonymous reviewers for valuable feedback on this manuscript. The author thanks Felix Clemen for discussions on the probabilistic arguments of this manuscript, and all participants of the \textit{Erd\H{o}s Center Workshop on Saturation in the Forbidden Subposet Problem} for many insightful sessions discussing related Ramsey-type questions.
\\

\noindent\textbf{Funding Statement:}~\quad Research was partially supported by DFG grant FKZ AX 93/2-1. The funder had no role in study design, data collection and analysis, decision to publish, or preparation of the manuscript.
\\

\noindent\textbf{Data Availability Statement:}~\quad Data sharing not applicable to this article as no datasets were generated or analysed during the current study.
\\

\noindent\textbf{Competing Interests Statement:}~\quad The author declares no competing interests.
\\




\begin{thebibliography}{99}
\bibitem{AW} M. Axenovich,  and S. Walzer. \textit {Boolean lattices: Ramsey properties and embeddings}. Order \textbf{34}(2), 287--298, 2017.

\bibitem{QnQn} M. Axenovich, and C. Winter. \textit{Diagonal poset Ramsey numbers}. ArXiv preprint, 2024.

\bibitem{QnV} M. Axenovich, and C. Winter. \textit{Poset Ramsey numbers: large Boolean lattice versus a fixed poset}. Combinatorics, Probability and Computing, 1--16, 2023.

\bibitem{QnN} M. Axenovich, and C. Winter. \textit{Poset Ramsey Number R(P,Qn). II. N-shaped poset}. Order, 2024.


\bibitem{BP} T. Bohman, and F. Peng. \textit{A Construction for Boolean Cube Ramsey Numbers}. Order, 2022.






\bibitem{CCLL} H.-B. Chen, Y.-J. Cheng, W.-T. Li, and C.-A. Liu. \textit{The Boolean Rainbow Ramsey Number of Antichains, Boolean Posets and Chains}. Electronic Journal of Combinatorics \textbf{27}(4), 2020.


\bibitem{Chudnovsky} M. Chudnovsky. \textit{The Erdös–Hajnal Conjecture - A Survey}. Journal of Graph Theory \textbf{75} 178--190, 2014.

\bibitem{CS} C. Cox, and D. Stolee. \textit{Ramsey Numbers for Partially-Ordered Sets}. Order \textbf{35}(3), 557--579, 2018.







\bibitem{EH} P. Erd\H{o}s, and A. Hajnal. \textit{Ramsey-type theorems}. Discrete Applied Mathematics, \textbf{25}(1-2), 37–52, 1989.

\bibitem{FK} A. Frieze, and M. Karo\'{n}ski. \textit{Introduction to random graphs}. Cambridge University Press, Cambridge, 2016.

\bibitem{FMTZ} V. Falgas-Ravry, K. Markstr\"om, A. Treglown, and Y. Zhao. \textit{Existence thresholds and Ramsey properties of random posets}. Random Structures and Algorithms \textbf{57}(4), 1097--1133, 2020.



\bibitem{GMT} D. Gr\'osz, A. Methuku, and C. Tompkins. \textit{Ramsey numbers of Boolean lattices}. Bulletin of the London Mathematical Society, 2023.








\bibitem{LT} L. Lu, and C. Thompson. \textit{Poset Ramsey Numbers for Boolean Lattices}. Order \textbf{39}(2), 171--185, 2022.



\bibitem{MSZ} D. Mubayi, A. Suk, and E. Zhu. \textit{A note on the Erdős-Hajnal hypergraph Ramsey problem}. Proceedings of the American Mathematical Society \textbf{150}, 3675--3685, 2022.



\bibitem{NSS} T. Nguyen, A. Scott, and P. Seymour. \textit{Induced subgraph density. VII. The five-vertex path}. ArXiv preprint, 2023.





\bibitem{Ramsey} F. P. Ramsey. \textit{On a Problem of Formal Logic}. Proceedings of the London Mathematical Society \textbf{s2-30}(1), 264--286, 1930. 

\bibitem{Sperner} E. Sperner. \textit{Ein Satz \"uber Untermengen einer endlichen Menge}, (in German). Mathematische Zeitschrift, \textbf{27}(1), 544--548, 1928.




\bibitem{Walzer} S. Walzer. \textit{Ramsey Variant of the 2-Dimension of Posets}. In: Master Thesis, Karlsruhe Institute of Technology (2015).

\bibitem{Weber} L. Weber. \textit{On the structure of graphs with forbidden induced substructures}. In: PhD Thesis, Karlsruhe Institute of Technology (2023).

\bibitem{QnK} C. Winter. \textit{Poset Ramsey number R(P,Qn). I. Complete multipartite posets}. Order, 2023. 

\bibitem{QnPA} C. Winter. \textit{Poset Ramsey Number R(P,Qn). III. Chain Compositions and Antichains}. Discrete Mathematics, \textbf{347}(7), 114031, 2024.

\bibitem{Winter-diss} C. Winter. \textit{Ramsey numbers for partially ordered sets}. In: PhD Thesis, Karlsruhe Institute of Technology (2024). Available at \url{https://doi.org/10.5445/IR/1000173294}.

\end{thebibliography}
\end{document}